\author{Mara Ungureanu}
\title{Universal polynomials for counts of secant planes to projective curves}
\date{}
\noindent\begin{footnotesize}\textsc{Albert-Ludwigs-Universit\"at Freiburg, Mathematisches Institut, Abteilung Reine Mathematik, Ernst-Zermelo-Str 1, 79104 Freiburg}
\DeclareMathOperator{\rk}{rk}
\DeclareMathOperator{\oo}{\mathcal{O}}
\DeclareMathOperator{\p}{\mathbb{P}}
\newtheorem{thm}{Theorem}[section] 
\newtheorem{lemma}[thm]{Lemma}
\newtheorem{prop}[thm]{Proposition}
\newtheorem{cor}[thm]{Corollary}
\theoremstyle{definition}
\theoremstyle{definition}
\theoremstyle{remark}
\newtheorem{rem}{Remark}[section]
\begin{document}

\maketitle
 
\begin{abstract}
In this article we provide another method for obtaining explicit formulas yielding counts of secant planes to a projective curve.  We formulate the problem in terms of Segre classes of suitable bundles over the symmetric product of the curve and take advantage of the result of Ellingsrud, Göttsche, and Lehn concerning the structure of integrals of polynomials in Chern classes of tautological bundles over Hilbert schemes of points.
We use this to set up a recursion starting from the easy to compute case of the projective line.
\end{abstract}

\section{Introduction}
The motivation for this article is twofold.  First of all, our interest stems from a classical problem in enumerative algebraic geometry, namely that of counting the number of $e$-secant $(e-f)$-planes to a projective variety, where the parameters $e$ and $f$ are chosen such that one expects a finite answer.  In the case of smooth curves, this question has given rise to a number of well-known formulas dating back to Castelnuovo and Cayley, but has also been the object of more recent interest, as in the work of Cotterill \autocite{Co1}, \autocite{Co},  Farkas \autocite{Fa2}, and Le Barz \autocite{LB1}.
The general setup is the following: consider a general curve $C$ of genus $g$ equipped with a linear series $l$ of degree $d$ and dimension $r\geq 3$.  Let $e$ and $f$ be positive integers such that $0\leq f<e\leq d$ and denote by $C_e$ the $e$-th symmetric product of the curve.  Then
\[V_e^{e-f}(l)=\{D\in C_e \mid \dim (l-D)\geq r-e+f \}\subset C_e\]
is called the \textit{secant variety} of effective divisors of degree $e$ which parametrises the $e$-secant $(e-f-1)$-planes to the image of the curve $C$ under the embedding into $\p^r$ via $l$.  The cycle $V_e^{e-f}(l)$ is in fact a variety with a degeneracy locus structure and as such it has expected dimension
\[\exp\dim V_e^{e-f}(l) = e-f(r+1-e+f).\]
Even better, we know that if $l$ is a general linear series, then $V_e^{e-f}(l)$ is in fact equidimensional and of expected dimension \autocite{Fa2}.  We shall therefore restrict ourselves to this situation, and moreover to the case when $\dim V_e^{e-f}(l)=0$, so that the enumerative problem is well-defined.
The class of the cycle $V_e^{e-f}(l)\subset C_e$ was computed by MacDonald (see also \autocite[Chapter VIII]{ACGH}), but unfortunately the general formula is difficult to use in practice, also in our case of interest when the dimension of the secant variety is zero. 
However, when $f=1$, the number $v_e$ of $e$-secant $(e-2)$-planes to a curve of degree $d$ in $\p^{2e-2}$ has a particularly convenient formula (see \autocite{Cas} or \autocite[Chapter VIII \S 4]{ACGH}):
\[v_e = \sum_{\alpha=0}^{e}(-1)^{\alpha}\binom{g+2e-d-2}{\alpha}\binom{g}{e-\alpha},\]
and its corresponding generating function $\sum_{e\geq 0} v_e t^e$ has also been computed (see \autocite{LB} or \autocite{Co}):
\[ S(t) = \Biggl(\frac{1+\sqrt{1+4t}}{2}\Biggr)^{d}\Biggl(\frac{-1-4t+(1+2t)\sqrt{1+4t}}{2t^2}\Biggr)^{g-1}. \]
Thus, our task from this first point of view is to find similar formulas for higher values of the parameter $f$.

\bigskip

The second aspect we consider, and which will help us tackle our problem, is that of integrals of characteristic classes of tautological bundles over Hilbert schemes of points of varieties.  These objects are not only interesting because they frequently have an enumerative interpretation, as we shall see below, but also because they were crucial in establishing the fact that the cobordism class of the Hilbert scheme of points of a smooth projective surface depends only on the class of the surface itself (see \autocite{EGL}).
The general setup is as follows: consider a smooth projective variety $X$ and denote by $X^{[n]}$ the Hilbert scheme of $n$ points of $X$.  Given a vector bundle $F$ of rank $r$ on $X$ one has an associated \textit{tautological bundle} $F^{[n]}$ of rank $nr$ on $X^{[n]}$ whose fibre at $[Z]\in X^{[n]}$ is $H^0(Z,F|_Z)$.
 Our guiding result in this context is the following theorem due to Ellingsrud, Göttsche, and Lehn on the structure of such integrals:
\begin{thm}{\autocite[Proposition 4.1]{EGL}}\label{thm:egl}
Let $X$ be a smooth projective surface equipped with a vector bundle $F$  whose rank $r$ is constant on the irreducible components of $X$.  Moreover, let $P$ be a weighted homogeneous polynomial of degree $2n$ in the Chern classes of the tangent bundle $T_n$ of $X^{[n]}$ and of $F$, where each Chern class $c_i$ has weight $i$.   Then there is a universal polynomial $\widetilde{P}$ in the variables $\{\int_X c_1^2(T_X),\int_X c_2(T_X),\int_X c_1^2(F),\int_X c_2(F), \int_X c_1(T_X)c_1(F)\}$ and depending only on $P$ and $r$ such that
\begin{equation}\label{eq:eglintegral}
\deg(P\cap X^{[n]})=\int_{X^{[n]}} P=\widetilde{P}.
\end{equation} 
\end{thm}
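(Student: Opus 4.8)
The plan is to argue by induction on $n$, the essential geometric input being the nested (incidence) Hilbert scheme
\[
X^{[n,n+1]}=\{(Z,Z')\in X^{[n]}\times X^{[n+1]} : Z\subset Z'\}
\]
together with its three natural morphisms: the projections $\psi\colon X^{[n,n+1]}\to X^{[n]}$ and $\phi\colon X^{[n,n+1]}\to X^{[n+1]}$, and the residual-point map $\rho\colon X^{[n,n+1]}\to X$ sending $(Z\subset Z')$ to the point supporting $Z'/Z$. The base case is immediate: $X^{[0]}$ is a point and a weighted homogeneous polynomial of degree $0$ is a constant, while $X^{[1]}=X$, $F^{[1]}=F$, $T_1=T_X$, and a weighted homogeneous polynomial of degree $2$ in these Chern classes is already a $\mathbb{Z}$-linear combination of $c_1^2(T_X)$, $c_2(T_X)$, $c_1^2(F)$, $c_2(F)$, $c_1(T_X)c_1(F)$. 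It is convenient to prove simultaneously the version in which $P$ may involve the tautological bundles $F_1^{[n]},\dots,F_s^{[n]}$ of finitely many bundles $F_i$ on $X$ (with the list of invariants enlarged by the Chern numbers of the $F_i$ on $X$), since the recursion below forces the tautological bundles $\mathcal{O}_X^{[n]}$ and $\omega_X^{[n]}$ to appear.

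For the inductive step I would use three structural facts about $X^{[n,n+1]}$. First, it is smooth, irreducible, of dimension $2n+2$, and $\phi$ is generically finite of degree $n+1$, so that $\int_{X^{[n+1]}}P=\tfrac1{n+1}\int_{X^{[n,n+1]}}\phi^{*}P$. Second, $(\psi,\rho)\colon X^{[n,n+1]}\to X^{[n]}\times X$ is the blow-up along the universal length-$n$ subscheme $\Xi_n\subset X^{[n]}\times X$, which is smooth of dimension $2n$ and canonically isomorphic to $X^{[n-1,n]}$; write $E$ for the exceptional divisor and $\xi=-[E]$. Third, and this is the heart of the matter, on $X^{[n,n+1]}$ there is a short exact sequence
\[
0\longrightarrow \rho^{*}F_i\otimes\mathcal{L}\longrightarrow \phi^{*}F_i^{[n+1]}\longrightarrow \psi^{*}F_i^{[n]}\longrightarrow 0
\]
for a line bundle $\mathcal{L}$ with $c_1(\mathcal{L})=\xi$, together with a companion identity in the Grothendieck group expressing $\phi^{*}T_{n+1}$ in terms of $\psi^{*}T_n$, $\rho^{*}T_X$, the bundles $\psi^{*}\mathcal{O}_X^{[n]}$, $\psi^{*}\omega_X^{[n]}$ and $\mathcal{L}$, the correction terms arising from the $\mathrm{Ext}$-groups of a skyscraper sheaf $k(x)$ on the surface $X$ (which is where the Chern classes of $T_X$ enter).

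Granting these, substituting the two recursions into $\phi^{*}P$ rewrites it as a weighted homogeneous polynomial of degree $2n+2$ in the classes $\psi^{*}c_j(T_n)$, $\psi^{*}c_j(G^{[n]})$ (for $G$ among $F_1,\dots,F_s,\mathcal{O}_X,\omega_X$), $\rho^{*}c_j(T_X)$, $\rho^{*}c_j(F_i)$ and $\xi$. One then pushes forward along $(\psi,\rho)$ and uses the projection formula together with the blow-up formula $(\psi,\rho)_{*}(\xi^{0})=1$ and $(\psi,\rho)_{*}(\xi^{k})=-\iota_{*}\,s_{k-2}(N_{\Xi_n})$ for $k\ge1$, where $\iota\colon\Xi_n\hookrightarrow X^{[n]}\times X$ and $s_{\bullet}$ denotes Segre classes. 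By the Künneth decomposition of $H^{*}(X^{[n]}\times X)$ the terms with $k=0$ become products $\bigl(\int_{X^{[n]}}\alpha\bigr)\bigl(\int_X\beta\bigr)$, with $\alpha$ a weighted homogeneous polynomial of degree $2n$ in tautological Chern classes on $X^{[n]}$ and $\beta$ one of degree $2$ in the Chern classes of $T_X$ and of the $F_i$ (any factor involving $\mathcal{O}_X$ or $\omega_X$ being expressible through the Chern numbers of $X$): the first factor is universal in the invariants by the induction hypothesis, and $\int_X\beta$ is a $\mathbb{Z}$-linear combination of the listed invariants. The terms with $k\ge1$ are integrals over $\Xi_n\cong X^{[n-1,n]}$ of weighted homogeneous classes of top degree built from $\tilde\phi^{*}$ of tautological classes on $X^{[n]}$, pullbacks from $X$, Segre classes of $N_{\Xi_n}$ (themselves tautological classes on $X^{[n-1,n]}$), and the analogous class $\tilde\xi$; applying the same two recursions once more — now to pass from $X^{[n]}$- to $X^{[n-1]}$-data — and then the blow-up formula for $(\tilde\psi,\tilde\rho)\colon X^{[n-1,n]}\to X^{[n-1]}\times X$, one lands again on products of an integral over $X^{[n-1]}$ with an integral over $X$. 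Iterating this descent — at each stage the relevant incidence variety being itself a blow-up that takes us one level lower — terminates after finitely many steps and exhibits $\int_{X^{[n+1]}}P$ as a universal polynomial in the invariants, which closes the induction.

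The step I expect to be the main obstacle is the third structural fact: establishing the precise identification of $X^{[n,n+1]}$ as a blow-up with smooth centre, and above all pinning down exactly the Grothendieck-group recursion for the tangent bundle of the Hilbert scheme along the incidence, whose error terms are what bring the Chern classes of $T_X$ into $\widetilde P$. Once these are available, the rest is a systematic, if lengthy, manipulation with the projection formula, the Künneth formula, and the Segre-class calculus of blow-ups.
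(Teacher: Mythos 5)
The paper does not actually prove this theorem --- it is quoted verbatim from Ellingsrud--G\"ottsche--Lehn \autocite{EGL} --- and your argument is essentially a faithful reconstruction of their proof of Proposition 4.1: induction via the incidence scheme $X^{[n,n+1]}$, its identification as the blow-up of $X^{[n]}\times X$ along $\Xi_n$, the short exact sequence relating $\phi^{*}F^{[n+1]}$ to $\psi^{*}F^{[n]}$, the $K$-theoretic recursion for the tangent bundle (which is what forces the auxiliary bundles $\mathcal{O}_X^{[n]}$, $\omega_X^{[n]}$ and the Chern classes of $T_X$ into the picture), and the iterated descent through $\Xi_n\cong X^{[n-1,n]}$. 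The tangent-bundle recursion you single out as the main obstacle is precisely the key lemma of \autocite{EGL} and is the one genuine input you would still need to supply; everything else in your outline matches their argument.
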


\noindent Although the original result was stated and proved for smooth projective surfaces, one can use the same arguments as in \autocite{EGL} to obtain the analogous statement for smooth projective curves.  Furthermore, by using significantly different methods, Rennemo \autocite{Re} has extended the theorem to smooth projective varieties of any dimension.

Despite the simple form of the equality (\ref{eq:eglintegral}), it can in practice be difficult to compute the universal polynomial $\widetilde{P}$.
A particular case in which such convenient formulas can be found is that of integrals of top Segre classes over Hilbert schemes of points of curves or surfaces.  For example, if $X$ is a K3 surface with a line bundle $H$, Voisin \autocite{Vo} has recently shown that
\[\int_{X^{[n]}} s_{2n}(H^{[n]}) = 2^n \binom{\frac{H^2}{2}+2-2n}{n},\]
where $H^2$ denotes the intersection product $\int_X c_1^2(H)$. 
In fact, the formula for the generating function of the above integrals for any surface $X$ is the subject of Lehn's conjecture \autocite{L}, which was also recently established in a series of papers by Marian, Oprea, and Pandharipande (\autocite{MOP}, \autocite{MOP1}) in which the authors showed that
\begin{equation}\label{eq:0}
\sum_{n=0}^{\infty}z^n \int_{X^{[n]}}s_{2n}(H^{[n]}) = \frac{(1-w)^a(1-2w)^b}{(1-6w+6w^2)^c},
\end{equation}
for constants $a,b,c$ depending only on $H^2$, $K_X^2$, $H.K_X$ and $\chi(\mathcal{O}_X)$ in a precise way, and with a change of variables $z \leftrightarrow w$ (for details see for example equation (2) of \autocite{MOP1}).
In the case of curves, generating functions for the analogous top Segre integral of tautological bundles associated to vector bundles of rank $r\geq 1$ have been given in \autocite{MOP1} and \autocite{Wa}.

Thus, in this context, our task is to find universal polynomials for integrals of other weighted homogeneous polynomials of degree $n$ (resp.~$2n$) in Segre classes of tautological bundles over $X^{[n]}$, where $X$ is a curve (resp.~a surface).

\bigskip

The connection between the two aforementioned tasks is realised via the enumerative interpretation of the Segre integrals.  More precisely, the top Segre integrals yield the expected number of $n$-secant $(n-2)$-planes to the curve (resp.~surface) $X$ in $\p^{2n-2}$ (resp.~$\p^{3n-2}$), i.e.~precisely the counts discussed above in the case of curves when $f=1$.  
It is therefore natural to consider the next case, namely $f=2$, and investigate to what extent the methods developed for computing Segre integrals may help in finding explicit enumerative formulas.  

\bigskip

Hence, the aim of this article is to use these methods to compute the number of $e$-secant $(e-3)$-planes to a smooth curve $C$ in $\p^{\frac{3e}{2}-3}$, or equivalently, as shall be explained in the course of the paper, the universal polynomial corresponding to
\begin{equation}\label{eq:1}
 \int_{C^{[e]}} s_{\frac{e}{2}}(L^{[e]}) - s_{\frac{e}{2}-1}(L^{[e]})s_{\frac{e}{2}+1}(L^{[e]}),
\end{equation}
where $e\geq 4$ is an even integer and $L$ is the line bundle whose sections give the embedding of $C$ in projective space.  For simplicity we restrict to complete linear systems, so we have $h^0(C,L)=\frac{3e}{2}-2$ and we write the secant variety as $V_e^{e-f}(L)$, where $f=2$.  Note that the parameter choice indeed yields $\exp\dim V_e^{e-f}(L)=0$.

\section*{Roadmap}
The paper is organised as follows:
\begin{itemize}
	\item In Section \ref{sec:secantrecap} we summarise the description of secant varieties as degeneracy loci of maps between vector bundles on the symmetric product of the curve.  We then recall how to use it to obtain the expression of the class of $V_e^{e-f}(L)$ in terms of Segre classes of $L^{[e]}$ in the cases $f=1$ and $f=2$, thus recovering the top Segre integral and (\ref{eq:1}), respectively.
	\item The main results of Section \ref{sec:structuretheorem}, namely Theorem \ref{thm:structure} and Corollary \ref{cor:struct}, are inspired by the Structure Theorem 4.2 of \autocite{EGL}, which, in a simplified formulation, states the following: consider a multiplicative characteristic class $\phi$, i.e.~with the property that $\phi(F\oplus F')=\phi(F)\phi(F')$, for two vector bundles $F$ and $F'$ on a variety $X$.  Then, if $X$ is a smooth projective surface with a vector bundle $F$, the authors prove that
	\[\sum_{n=0}^{\infty} \int_{X^{[n]}} \phi(F^{[n]})z^n = \exp\Bigl(\sum_{i=1}^5 C_i A_i(z)\Bigr),\]
	where the $A_i(z)$ are power series in $\mathbb{Q}[[z]]$ and the $C_i$ denote the entries of the vector $(\int_X c_1^2(T_X),\int_X c_2(T_X),\int_X c_1^2(F),\int_X c_2(F), \int_X c_1(T_X)c_1(F))$.  The analogous statement for curves also holds.  The total Segre and Chern classes are such  multiplicative classes, and this property, together with the Structure Theorem above have been crucial ingredients in finding the formulas for integrals of top Segre classes and hence the expression (\ref{eq:0}).  However, for $f=2$ we shall be dealing instead with products of additive classes.  Thus, our Theorem \ref{thm:structure} and Corollary \ref{cor:struct} provide useful formulas for dealing with generating functions of integrals of products of additive classes of tautological bundles. 
	
	\item Section \ref{sec:prelim} is dedicated to the computation of the integral (\ref{eq:1}).  To do so we use the results of Section \ref{sec:structuretheorem} to establish recursions starting from the easy case of the projective line. After first establishing some preliminary facts in Section \ref{sec:reduction} and Section \ref{sec:p1},  this is done in two parts:
	\item In Section \ref{sec:lucky} we first consider a special case of our problem, namely $f=2$ and $e=4$.  A closed expression for this count already exists (see \autocite[Chapter VIII \S 4, Example 3]{ACGH}), but we provide here a different way to approach the computation.  More precisely, we notice that for any vector bundle $F$ the following holds:
	\[ s_2^2(F) - s_1(F)s_3(F) = c_4(F)+s_4(F)-6c_1(F)ch_3(F). \]
Now, the integrals $\int_{C^{[4]}}c_4(L^{[4]})$ and $\int_{C^{[4]}}s_4(L^{[4]})$ can be immediately read off from the literature (see \autocite{MOP1} or \autocite{Wa}), where, as we mentioned before, one takes advantage of the multiplicativity of the total Chern and Segre classes. 
Thus, to find (\ref{eq:1}) it is enough to compute $\int_{C^{[4]}}c_1(L^{[4]})ch_3(L^{[4]})$ using the additivity of $c_1$ and of the Chern character and the results of Section \ref{sec:structuretheorem}.  This yields the formula:
	\[\int_{C^{[4]}}c_1(L^{[4]})ch_3(L^{[4]}) = \frac{1}{6}((d+g-3)^2-4g-g(g-1)),\]
	for a smooth curve $C$ of genus $g$ with $L$ of degree $d\geq 4$.
	\item Finally, in Section \ref{sec:generalcasecurves} we present a structure result for the general case of (\ref{eq:1}).
\end{itemize}

\section{Classes of secant varieties}\label{sec:secantrecap}
We begin by recalling a few well-known facts regarding secant varieties to projective curves or surfaces, and in particular their description as degeneracy loci of vector bundle morphisms over Hilbert schemes of points.
We follow Fulton \autocite{Ful}.  Let $X$ be a variety, $\varphi:E\rightarrow F$ a morphism of vector bundles over $X$, and $k\leq \min{\rk E,\rk F}$ a positive integer.  Denote by $A_m(X)$ the  $m$-th Chow group of $X$.
The $k$-th degeneracy locus
\[ D_k(\varphi) = \{ x\in X \mid \rk (\varphi)\leq k  \} \]
has codimension at most $(\rk E-k)(\rk F-k)$ in $X$, if non-empty.    Now set 
\[m=\dim X - (\rk E-k)(\rk F-k)\] and denote by $\mathbb{D}_k(\varphi)$ the class of $D_k(\varphi)$ in $A_m(D_k(\varphi))$.  By the Thom-Porteous formula, the image of $\mathbb{D}_k(\varphi)$ inside $A_m(X)$ is given by 
\begin{equation}\label{eq:classdef}
\Delta^{(\rk E - k)}_{\rk F - k}(c(F-E))\cap [X],
\end{equation} 
where $\Delta_q^{(p)}(c)$ denotes the determinant of the $p\times p$ matrix with entries $(c_{q+j-i})_{1\leq i,j \leq p}$.

We are interested in the situation where the variety in question is the Hilbert scheme of points $X^{[e]}$, where $X$ is either a smooth projective curve or a smooth projective surface.  In both cases $X^{[e]}$ is a smooth projective variety of dimension $e$ if $X$ is a curve and $2e$ if $X$ is a surface.  Moreover, if $X$ is a curve we also have that $X^{[e]}$ is isomorphic to the symmetric product $X_e$.  Moreover, let $L$ be a line bundle of degree $d$ on $X$ and with $h^0(X,L)=r+1$ whose sections give an embedding of $X$ in $\p^r$.

Secant varieties $V_e^{e-f}(L)$ of $e$-secant $(e-f-1)$-planes to $X$ can be endowed with a degeneracy locus structure inside the Hilbert scheme $X^{[e]}$ of $0$-dimensional subschemes of $X$ of length $e$.  To see this, let $E=\oo_{X^{[e]}}\otimes H^0(X,L)$ be the trivial vector bundle of rank $r+1$ on $X^{[e]}$ and $L^{[e]}:=\tau_*(\sigma^* L\otimes\oo_{\mathcal{U}})$ be the tautological bundle of rank $e$, where $\mathcal{U}$ is the universal family of subschemes parametrised by $X^{[e]}$
\[ \mathcal{U} = \{ (x,	[Z]) \mid x\in Z\text{ and }[Z]\in X^{[e]} \} \subset X\times X^{[e]}, \]
and $\sigma$, $\tau$ are the usual projections: 
 \begin{figure}[H]\centering
  \begin{tikzpicture}
    \matrix (m) [matrix of math nodes,row sep=2em,column sep=1em,minimum width=1em]
  {
     & X \times X^{[e]} & \supset \mathcal{U} \\
     X & & X^{[e]}\\};
  \path[-stealth]
    (m-1-2) edge node [auto,swap] {$\sigma$} (m-2-1)
            edge node [auto]{$\tau$}  (m-2-3);
 \end{tikzpicture}
 \end{figure}
 
Let $\Phi:E\rightarrow L^{[e]}$ be the vector bundle morphism obtained by pushing down to $X^{[e]}$ the restriction map $\sigma^* L \rightarrow \sigma^* L \otimes\oo_{\mathcal{U}}$.  Then one obtains $V_e^{e-f}(L)$ as the $(e-f)$-th degeneracy locus of $\Phi$.  Indeed, fibrewise, the morphism $\Phi$ is given by the restriction
\[ \Phi_Z : H^0(X,L) \rightarrow H^0(X,L|_Z), \]
while $Z\in V_e^{e-f}(L)$ if and only if $\rk \Phi_Z \leq e-f$.

From the above considerations we see that $V_e^{e-f}(L)$ has expected dimension
\[ \exp\dim V_e^{e-f}(L) = e\cdot\dim X  - f(r+1-e+f). \]
Our enumerative study thus restricts itself to the case $\exp\dim V_e^{e-f}(L) = 0$ where we expect to have only finitely many $e$-secant $(e-f-1)$-planes to $X$.  Of course, the numbers obtained from (\ref{eq:classdef}) are virtual, unless one is able to prove that the secant variety $V_e^{e-f}(l)$ is indeed not empty and of expected dimension 0.  This has been verified for general curves with general linear series by Farkas \autocite{Fa2}.  

In the remainder of this section we describe the class of $V_e^{e-f}(L)$ from (\ref{eq:classdef}) in more detail in the cases $f=1$ and $f=2$.  As mentioned in the Introduction, the case $f=1$ is by now well-known for both curves and surfaces, while the case $f=2$ constitutes the main focus of this article.

\subsection{The case $f=1$}
If $f=1$, then we have that $e\cdot\dim X = r-e+2$, so that if $X$ is a curve, then $r=2e-2$ and if $X$ is a surface, then $r=3e-2$.  From now on we write $C$ for curves and $S$ for surfaces.  Hence $\rk E = r+1 \in \{ 2e-1, 3e-1 \}$ and $\rk L^{[e]}=e$, which means that the class of $V_e^{e-f}(L)$ is given via (\ref{eq:classdef}) by
\[ \Delta^{(e)}_1 (c(L^{[e]}))\cap C^{[e]}=\left|\begin{array}{cccc}
c_1(L^{[e]}) & c_2(L^{[e]}) & \cdots & c_e(L^{[e]})\\
1 & c_1(L^{[e]}) & \cdots & c_{e-1}(L^{[e]})\\
\vdots & \vdots & \vdots & \vdots \\
0 & 0 & 1 & c_1(L^{[e]})
\end{array} \right| \in A_0(C^{[e]}) \]
for curves and 
\[ \Delta^{(2e)}_1 (c(L^{[e]}))\cap S^{[e]}=\left|\begin{array}{cccc}
c_1(L^{[e]}) & c_2(L^{[e]}) & \cdots & c_{2e}(L^{[e]})\\
1 & c_1(L^{[e]}) & \cdots & c_{2e-1}(L^{[e]})\\
\vdots & \vdots & \vdots & \vdots \\
0 & 0 & 1 & c_1(L^{[e]})
\end{array} \right| \in A_0(S^{[e]}) \]
for surfaces. In both cases we used the fact that $E$ is a trivial bundle so that $c(L^{[e]}-E)=\frac{c(L^{[e]})}{c(E)}=c(L^{[e]})$.  Although the above expressions for the degeneracy loci $V_e^{e-f}(L)$ appear quite complicated, one may obtain a certain degree of simplification by rewriting them in terms of determinants in Segre classes of the dual bundle $L^{[e]}$.
To see how, we record here the following useful result, which is a direct consequence of Lemma 14.5.1 of \autocite{Ful}:
\begin{lemma}
Let $\alpha_1,\alpha_2,\ldots$ and $\beta_1,\beta_2,\ldots$ be two sequences of commuting variables related by the identity:
\[ (1+\alpha_1 t + \alpha_2 t^2 + \cdot)\cdot (1-\beta_1 t + \beta_2 t^2 - \beta_3 t^3 + \cdots)  =1. \]
Then $\Delta_q^{(p)} (\alpha) = \Delta _p^{(q)}(\beta)$ for any positive integers $p$ and $q$.
\end{lemma}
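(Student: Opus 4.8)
The plan is to recognise both determinants as one and the same Schur polynomial, attached to a rectangular partition, and then to read the desired equality off from the two dual forms of the Jacobi--Trudi formula. (This is in essence the content of Lemma 14.5.1 of \autocite{Ful}, so a one-line proof is available by simply citing that result after matching conventions; but a self-contained argument runs as follows.) First I would pass to the ring of symmetric functions $\Lambda = \mathbb{Z}[h_1,h_2,\dots] = \mathbb{Z}[e_1,e_2,\dots]$, with $h_i$ the complete homogeneous and $e_i$ the elementary symmetric functions, so that the generating series $H(t) = \sum_{i\geq 0} h_i t^i$ and $E(t) = \sum_{i\geq 0} e_i t^i$ (with $h_0 = e_0 = 1$) obey the universal identity $H(t)\,E(-t) = 1$. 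Since the hypothesis relating $\alpha_i$ and $\beta_i$ is precisely this identity, the assignment $h_i \mapsto \alpha_i$ extends to a ring homomorphism $\varphi$ from $\Lambda$ to the (commutative) ring in which the variables live, and applying $\varphi$ to $H(t)E(-t) = 1$ forces $\varphi(e_i) = \beta_i$ as well. Hence it suffices to prove the identity $\Delta_q^{(p)}(h) = \Delta_p^{(q)}(e)$ in $\Lambda$ and then apply $\varphi$.

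To do that I would take the rectangular partition $\lambda = (q,\dots,q)$ with $p$ equal parts. The Jacobi--Trudi formula gives
\[ s_\lambda = \det\bigl(h_{\lambda_i - i + j}\bigr)_{1\leq i,j\leq p} = \det\bigl(h_{q+j-i}\bigr)_{1\leq i,j\leq p} = \Delta_q^{(p)}(h), \]
and since the conjugate partition is $\lambda' = (p,\dots,p)$ with $q$ equal parts, the dual Jacobi--Trudi formula gives
\[ s_\lambda = \det\bigl(e_{\lambda'_i - i + j}\bigr)_{1\leq i,j\leq q} = \det\bigl(e_{p+j-i}\bigr)_{1\leq i,j\leq q} = \Delta_p^{(q)}(e). \]
Comparing the two, $\Delta_q^{(p)}(h) = s_\lambda = \Delta_p^{(q)}(e)$ in $\Lambda$, and pushing this through $\varphi$ yields $\Delta_q^{(p)}(\alpha) = \Delta_p^{(q)}(\beta)$, as required.

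An alternative that avoids symmetric functions, at the cost of more sign bookkeeping, would be to argue directly with minors of a matrix and its inverse: form the lower-unitriangular matrices $A = (\alpha_{i-j})_{i,j}$ and $B = ((-1)^{i-j}\beta_{i-j})_{i,j}$, observe that the hypothesis says exactly $AB = I$ (so $B = A^{-1}$), truncate to a $(p+q)\times(p+q)$ block --- harmless because $A$ is unitriangular --- and identify $\Delta_q^{(p)}(\alpha)$ with the minor of $A$ on rows $\{q+1,\dots,q+p\}$ and columns $\{1,\dots,p\}$ (up to a harmless transposition). Jacobi's theorem on complementary minors then equates this, up to an explicit sign, with the complementary minor of $A^{-1} = B$, which after extracting the signs $(-1)^{i-j}$ equals $\Delta_p^{(q)}(\beta)$ up to sign. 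The step most likely to cause slips here is checking that the sign in Jacobi's identity, the signs pulled out of $B$, and the transpositions all cancel to $+1$; in the symmetric-function route this bookkeeping is handled once and for all inside the standard statements of the two Jacobi--Trudi formulas and of the relation $H(t)E(-t) = 1$, so I expect no genuine obstacle there, which is why I would favour that route (or simply cite \autocite[Lemma 14.5.1]{Ful}).
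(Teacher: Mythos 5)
Your proof is correct and takes the same route as the paper, which gives no argument of its own but simply cites Fulton's Lemma 14.5.1 --- exactly the reduction you offer as the one-line option. Your self-contained Jacobi--Trudi derivation via the rectangular partition $(q,\dots,q)$ with $p$ parts and its conjugate $(p,\dots,p)$ with $q$ parts is the standard proof of that cited lemma, and the sign conventions do match: the hypothesis is precisely the specialisation of $H(t)E(-t)=1$ under $h_i\mapsto\alpha_i$, $e_i\mapsto\beta_i$.
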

Substituting $c_i(L^{[e]})$ for the $\alpha_i$ and $s_i(L^{[e]})$ for the $\beta_i$ yields:
\begin{equation}\label{eq:classcurve}
\Delta^{(e)}_1 (c(L^{[e]}))\cap C^{[e]} = s_e((L^{[e]})^{\vee})\cap [C^{[e]}]
\end{equation}   for curves and
\begin{equation}\label{eq:classsurface}
\Delta^{(2e)}_1 (c(L^{[e]}))\cap S^{[e]} = s_{2e}((L^{[e]})^{\vee})\cap [S^{[e]}]=s_{2e}(L^{[e]})\cap [S^{[e]}]
\end{equation} for surfaces.

Thus the number of $e$-secant $(e-2)$-planes to a curve in $\p^{2e-2}$ and to a surface in $\p^{3e-2}$ is given by 
\[ \int_{C^{[e]}} s_e((L^{[e]})^{\vee}) = \deg (s_e((L^{[e]})^{\vee})\cap [C^{[e]}]),  \]
and
\[ \int_{S^{[e]}} s_{2e}((L^{[e]})^{\vee}) = \deg(s_{2e}(L^{[e]})\cap [S^{[e]}]),  \]
respectively.

These counts (or their corresponding generating functions) were indepedently computed by Cotterill \autocite{Co}, Le Barz \autocite{LB}, and Wang \autocite{Wa} for curves, whereas the surfaces case was settled in a series of papers by Marian, Oprea, and Pandharipande (\autocite{MOP}, \autocite{MOP1}), Voisin \autocite{Vo}, with some of the earliest partial results dating back to Lehn \autocite{L}.  In what follows we are interested in carrying out the computations for curves in the case $f=2$.

\subsection{The case $f=2$}
If $f=2$ we have that $e\cdot \dim C = 2(r-e+3)$ so that $\rk E=r+1= \frac{3e}{2}-2$.  The rank of $L^{[e]}$ is, as before, $e$ so using (\ref{eq:classdef}) again we can write the class of $V_e^{e-f}(L)$ in this case as
\[ \Delta^{(e/2)}_2(c(L^{[e]})) \cap C^{[e]} = \left| \begin{array}{cccc}
c_{2}(L^{[e]}) & c_{3}(L^{[e]}) & \cdots & c_{\frac{e}{2}+1}(L^{[e]})\\
c_1(L^{[e]}) & c_2(L^{[e]}) & \cdots & c_{\frac{e}{2}}(L^{[e]})\\
\vdots & \vdots & \vdots & \vdots\\
0 & \cdots & c_1(L^{[e]}) & c_{2}(L^{[e]})
\end{array} \right|.  \] 
To write this in terms of Segre classes we once more use the framework of Section \ref{sec:prelim}.  We have the partition $\lambda=(2,2,\ldots,2)$ of $e$ corresponding to $\Delta^{(e/2)}_2(c(L^{[e]}))$, whose conjugate partition is $\mu=\bigl(\frac{e}{2},\frac{e}{2}\bigr)$.  Thus
\begin{align*}
\Delta^{(e/2)}_2(c(L^{[e]})) &= \Delta^{(2)}_{e/2}(s((L^{[e]})^{\vee}))=\left|\begin{array}{cc}
s_{\frac{e}{2}}((L^{[e]})^{\vee}) & s_{\frac{e}{2}+1}((L^{[e]})^{\vee})\\
s_{\frac{e}{2}-1}((L^{[e]})^{\vee}) & s_{\frac{e}{2}}((L^{[e]})^{\vee}) 
\end{array} \right|\\ &= s_{\frac{e}{2}}(L^{[e]})^2 - s_{\frac{e}{2}-1}(L^{[e]})s_{\frac{e}{2}+1}(L^{[e]}).
\end{align*}

Hence the class of $V_e^{e-f}(L)$ can be written in the case $f=2$ in terms of a second degree polynomial in certain Segre classes.  This is already a simplification compared to the original expression although less convenient than the top Segre class that appears in the $f=1$ case.  In what follows we compute the integrals
\begin{equation}\label{eq:segrecurve}
\int_{C^{[e]}} s_{\frac{e}{2}}(L^{[e]})^2 - s_{\frac{e}{2}-1}(L^{[e]})s_{\frac{e}{2}+1}(L^{[e]}).
\end{equation} 
We shall use an inductive argument starting from the simple case of $\p^1$ equipped with the Serre twisting sheaf.

\begin{rem}
For surfaces $S$ we have
\[ \Delta^{(e)}_2(c(L^{[e]}))\cap S^{[e]} = \left| \begin{array}{cccc}
c_2(L^{[e]}) & c_3(L^{[e]}) & \cdots & c_{e+1}(L^{[e]})\\
c_1(L^{[e]}) & c_2(L^{[e]}) & \cdots & c_e(L^{[e]}) \\
\vdots & \vdots & \vdots & \vdots \\
0 & \cdots & c_1(L^{[e]}) & c_2(L^{[e]})
\end{array} \right|\] 
and the same argument as for curves yields
\[\Delta^{(e)}_2(c(L^{[e]}))=s_e(L^{[e]})^2 - s_{e-1}(L^{[e]})s_{e+1}(L^{[e]}).\]
%\begin{align*}
%\Delta^{(e)}_2(c(L^{[e]})) &= \Delta^{(2)}_{e}(s((L^{[e]})^{\vee}))=\left|\begin{array}{cc}
%s_{e}((L^{[e]})^{\vee}) & s_{e+1}((L^{[e]})^{\vee})\\
%s_{e-1}((L^{[e]})^{\vee}) & s_{e}((L^{[e]})^{\vee}) 
%\end{array} \right|\\ &= s_e(L^{[e]})^2 - s_{e-1}(L^{[e]})s_{e+1}(L^{[e]}).
%\end{align*}
\end{rem}

\section{Structure theorems for generating functions of tautological integrals}\label{sec:structuretheorem}
A very useful tool in computing integrals of certain characteristic classes is the beautiful Structure Theorem 4.2 of \autocite{EGL}, which we now recall in a slightly simplified form that suffices for our needs.  In keeping with the notation in loc.cit., let $K(X)$ be the Grothendieck group generated by locally free sheaves on $X$.
Moreover, let $\psi:K(X)\rightarrow H^*(X,\mathbb{Q})$ be a multiplicative function, i.e.~a group homomorphism from the additive group $K(X)$ of a smooth projective curve or surface $X$ into the multiplicative group of units in $H^*(X,\mathbb{Q})$.  Examples of such multiplicative functions are the total Chern or the total Segre classes. 
Now, given $x\in K(X)$, one defines a power series in $\mathbb{Q}[[z]]$ as follows:
\[H_{\psi}(X,x):=\sum_{k=0}^{\infty}\int_{X^{[k]}}\psi(x^{[k]})z^k.\]
We then have:

\begin{thm}(\autocite[Theorem 4.2]{EGL})\label{thm:structegl}
For each integer $r$ there are universal power series $A_i\in \mathbb{Q}[[z]]$ with $i=1,2$ if $X$ is a curve and $i=1,\ldots,5$ if $X$ is a surface, depending only on $\psi$ and $r$, such that for each $x\in K(X)$ of rank $r$ on every component of $X$ one has
\[H_{\psi}(X,x) = e^{ \int_X c_1(x)A_1 + \int_X c_1(T_X)A_2}\]
when $X$ is a curve and
\[H_{\psi}(X,x) = e^{\int_X c_1^2(x)A_1 + \int_X c_2(x)A_2 + \int_X c_1(x)c_1(T_X)A_3 + \int_X c_1^2(T_X)A_4 + \int_X c_2(T_X)A_5}\]
if $X$ is a surface.
\end{thm}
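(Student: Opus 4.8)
The plan is to deduce the stated formulas from Proposition 4.1 (Theorem \ref{thm:egl}), whose curve analogue we take for granted: for each $n$ there is a universal polynomial $P_n$ with $\int_{X^{[n]}}\psi(x^{[n]}) = P_n\bigl(\int_X c_1(x),\int_X c_1(T_X)\bigr)$ whenever $X$ is a curve and $x$ has constant rank $r$ on its components, and similarly $P_n$ a polynomial in the five Chern numbers $\int_X c_1^2(x),\int_X c_2(x),\int_X c_1(x)c_1(T_X),\int_X c_1^2(T_X),\int_X c_2(T_X)$ when $X$ is a surface. Only the top-degree component of $\psi(x^{[n]})$ contributes, and since $\psi$ is a fixed multiplicative class this component is a fixed weighted-homogeneous polynomial in the Chern classes of $x^{[n]}$, so the statement makes sense and $P_0 \equiv 1$. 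Writing $\mathbf{t}$ for the tuple of Chern numbers ($(t_1,t_2)$ in the curve case, $(t_1,\dots,t_5)$ in the surface case) and $F(\mathbf{t};z) := \sum_{n\ge 0} P_n(\mathbf{t})z^n \in \mathbb{Q}[\mathbf{t}][[z]]$, the content of Theorem \ref{thm:egl} is precisely $H_\psi(X,x) = F(\text{Chern numbers of }(X,x);z)$, so it suffices to show $F(\mathbf{t};z) = \exp\bigl(\sum_i t_i A_i(z)\bigr)$ for suitable $A_i \in \mathbb{Q}[[z]]$.

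The next step is to exploit disjoint unions, which are permitted here since the rank of $x$ need only be constant on components. If $X = X' \sqcup X''$ then $X^{[n]} = \coprod_{a+b=n} (X')^{[a]} \times (X'')^{[b]}$, and the restriction of $x^{[n]}$ to the component $(X')^{[a]} \times (X'')^{[b]}$ is $\mathrm{pr}_1^\ast (x')^{[a]} \oplus \mathrm{pr}_2^\ast (x'')^{[b]}$; multiplicativity and naturality of $\psi$ then give $\psi(x^{[n]}) = \mathrm{pr}_1^\ast\psi((x')^{[a]})\cdot\mathrm{pr}_2^\ast\psi((x'')^{[b]})$ on that component, so integrating over the product and summing the resulting convolution over $n$ yields $H_\psi(X' \sqcup X'', x' \sqcup x'') = H_\psi(X',x')\cdot H_\psi(X'',x'')$. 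All the Chern numbers in play are additive over disjoint unions, so this reads $F(\mathbf{t}'+\mathbf{t}'';z) = F(\mathbf{t}';z)\,F(\mathbf{t}'';z)$ for every pair $\mathbf{t}',\mathbf{t}''$ of Chern-number tuples arising from honest (possibly disconnected) curves, resp.\ surfaces, carrying bundles of the prescribed rank.

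The crux is a density argument. One exhibits enough examples — disjoint unions of copies of $\p^1$ and of curves of each genus carrying bundles of rank $r$ with arbitrary degree in the curve case; $\p^2$, $\p^1 \times \p^1$, and ruled surfaces with suitably twisted bundles in the surface case — so that the set of realisable tuples $\mathbf{t}$ is Zariski dense in $\mathbb{A}^2$ (resp.\ $\mathbb{A}^5$) and closed under addition. Since every coefficient of $z^n$ in $F(\mathbf{t}'+\mathbf{t}'';z) - F(\mathbf{t}';z)F(\mathbf{t}'';z)$ is then a polynomial vanishing on this dense set, the multiplicative functional equation holds identically in $\mathbb{Q}[\mathbf{t}',\mathbf{t}''][[z]]$. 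Because $P_0 \equiv 1$ we have $F = 1 + O(z)$ and $F(\mathbf{0};z) = 1$, so $G := \log F \in \mathbb{Q}[\mathbf{t}][[z]]$ is well defined and satisfies $G(\mathbf{t}'+\mathbf{t}'';z) = G(\mathbf{t}';z) + G(\mathbf{t}'';z)$. Expanding $G$ into parts homogeneous in $\mathbf{t}$ and noting that a homogeneous additive polynomial of degree $\ge 2$ over $\mathbb{Q}$ must vanish, we conclude that $G$ is a linear form $\sum_i t_i A_i(z)$ with $A_i \in \mathbb{Q}[[z]]$; exponentiating and substituting the Chern numbers of $(X,x)$ back in gives the two displayed formulas.

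I expect the main obstacle to be precisely this density step — verifying that the Chern numbers of the chosen geometric families really do fill out a Zariski-dense, additively closed subset of affine space, which is genuinely delicate in the surface case with its five coordinates — together with making the curve analogue of Proposition 4.1 precise, since there one has only two Chern numbers and must check that genus and the degree of the bundle already suffice to pin down all the universal polynomials $P_n$. Everything else is formal manipulation in $\mathbb{Q}[\mathbf{t}][[z]]$.
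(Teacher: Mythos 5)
Your argument is correct and is essentially the same one the paper relies on: the paper does not reprove this theorem but cites \autocite{EGL}, whose proof is exactly your scheme (factor $H_\psi$ through the Chern-number map via Proposition 4.1, use disjoint unions for multiplicativity, use Zariski density of realisable tuples to upgrade the functional equation to a polynomial identity, then take logarithms), and Remark \ref{rem1} supplies precisely the curve-case generators $(\p^1,r\cdot 1)$ and $(\p^1,\oo_{\p^1}(1)+(r-1)\cdot 1)$ with images $(0,2)$ and $(1,2)$ that settle the density step you flag as the main obstacle. The only point to make explicit is that Proposition 4.1 must be invoked for K-theory classes of constant rank rather than just vector bundles, which is how \autocite{EGL} state and use it.
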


\begin{rem}\label{rem1}
We note here that in \autocite{EGL}, the proof of Theorem \ref{thm:structegl} is done only for surfaces.  However, the same argument can be applied in the case of curves using the elements $(\p^1,r\cdot 1)$ and $(\p^1, \oo_{\p^1}(1)+(r-1)\cdot 1)$ of the space $\mathcal{K}_r=\{(X,x)\mid X \text{ algebraic curve, }x\in K(X)\}$, which under the map $\gamma:\mathcal{K}_r\rightarrow\mathbb{Q}^2$ sending $(X,x)$ to $(\int_X c_1(x),\int_X c_1(T_X))$ yield two linearly independent vectors $(0,2)$ and $(1,2)$.
\end{rem}

Unfortunately, classes of various geometric cycles inside Hilbert schemes of points of curves or surfaces are not always multiplicative.  While the multiplicativity of the total Segre class together with Theorem \ref{thm:structegl} was crucial in determining $\int_{X^{[e]}}s_{top}$ and hence the number of $e$-secant $(e-2)$-planes to a surface in $\p^{3e-2}$, unfortunately for different choices of parameters this is no longer the case.  Indeed, if we take $f=2$ instead of $f=1$, then  the cycle classes in (\ref{eq:classcurve}) and (\ref{eq:classsurface}) corresponding to $e$-secant $(e-3)$-planes to curves or surfaces are no longer multiplicative and other methods are therefore required to compute them.  In fact, in certain cases, these classes can be written down as a product of two additive classes.  Hence, it is useful for us to understand instead the behaviour of the power series
\[ H_{\phi\psi}(X,x):=\sum_{k=0}^{\infty}\int_{X^{[k]}}\phi(x^{[k]})\psi(x^{[k]})z^k, \]
where $\phi,\psi:K(X)\rightarrow H^*(X,\mathbb{Q})$ are additive functions, or more precisely, group homomorphisms between the additive groups $K(X)$ and $H^*(X,\mathbb{Q})$.   In particular, we are interested in how the power series $H_{\phi\psi}$ relates to the two power series
\begin{align*}
H_{\phi}(X,x)&:=\sum_{k=0}^{\infty}\int_{X^{[k]}} \phi(x^{[k]})z^k\text{ and }\\
H_{\psi}(X,x)&:=\sum_{k=0}^{\infty}\int_{X^{[k]}} \psi(x^{[k]})z^k.
\end{align*} 
To this end we prove:

\begin{thm}\label{thm:structure}
For each integer $r$ there are universal power series $B_i\in\mathbb{Q}[[z]]$ with $i=1,2$ if $X$ is a curve and $i=1,\ldots,5$ if $X$ is a surface, depending only on $r$, $\phi$, and $\psi$, such that for each $x\in K(X)$ of rank $r$ on every component of $X$ one has 
\begin{equation}\label{eq:linear1}
H_{\phi\psi}(X,x) - H_{\phi}(X,x)H_{\psi}(X,x) =\int_{X}c_1(x) B_1 +\int_X c_1(T_X) B_2
\end{equation}  
for curves and
\begin{align}\label{eq:linear2}
H_{\phi\psi}(X,x) - H_{\phi}(X,x)H_{\psi}(X,x) &= \int_X c_1^2(x)B_1 + \int_X c_2(x)B_2\\
 &+ \int_X c_1(x)c_1(T_X)B_3 + \int_X c_1^2(T_X)B_4 + \int_X c_2(T_X)B_5\nonumber
\end{align}  
for surfaces.
\end{thm}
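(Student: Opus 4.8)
The plan is to reduce the statement to Theorem \ref{thm:structegl} by a linearity-in-cobordism-data argument of exactly the same flavor as the original proof in \autocite{EGL}. The key observation is that the expression $H_{\phi\psi}(X,x) - H_\phi(X,x)H_\psi(X,x)$ is, for each fixed coefficient of $z^k$, a \emph{tautological integral} of a weighted homogeneous polynomial in the Chern classes of $T_X$ and of $x^{[k]}$: indeed $\phi(x^{[k]})\psi(x^{[k]})$ is such a polynomial, and by Theorem \ref{thm:egl} (the curve, resp. surface, version) its integral over $X^{[k]}$, as well as the integrals giving $H_\phi$ and $H_\psi$, are universal polynomials in the finitely many ``cobordism coordinates'' $\int_X c_1(x),\int_X c_1(T_X)$ for curves and $\int_X c_1^2(x),\int_X c_2(x),\int_X c_1(x)c_1(T_X),\int_X c_1^2(T_X),\int_X c_2(T_X)$ for surfaces. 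Hence the whole left-hand side of \eqref{eq:linear1}, resp. \eqref{eq:linear2}, coefficient by coefficient, is a universal polynomial in these coordinates, with coefficients depending only on $r$, $\phi$, $\psi$; assembling over $k$ gives a power series in $z$ whose coefficients are such polynomials.

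The heart of the matter is then to show that this power series is actually \emph{linear} (with no constant term) in the cobordism coordinates. First I would pin down the constant term: when $x$ and $T_X$ are ``trivial'' in the sense that all the coordinates vanish, the relevant degeneration/multiplicativity structure forces $H_{\phi\psi} = H_\phi H_\psi$. Concretely, one uses the geometric construction from \autocite{EGL}: via the Nakajima-type induction on the number of points, or via the fact that $H_\phi, H_\psi, H_{\phi\psi}$ all factor through the cobordism ring generated by $(\p^1, \oo(1))$ and $(\p^1, \oo)$ for curves (resp. the four generators $(\p^2,\oo), (\p^2,\oo(1)), (\p^1\times\p^1,\oo),\ldots$ for surfaces as in loc.cit.), the ``zero-coordinate'' locus corresponds to a class built from $(\p^1,0)$, where $(\p^1)^{[k]}=\p^k$ and the tautological bundle of the zero sheaf is zero, so every $\psi$-factor is $1$ in cohomology and the difference vanishes identically. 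This gives $B$-free constant term zero.

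Next, linearity: here I would invoke the structure already established in \autocite{EGL} for $H_\phi$ and $H_\psi$ individually — namely Theorem \ref{thm:structegl} says each is the \emph{exponential} of a linear form in the coordinates. Writing $H_\phi = e^{\ell_\phi}$ and $H_\psi = e^{\ell_\psi}$ with $\ell_\phi,\ell_\psi$ linear forms in the coordinates with coefficients in $z\mathbb{Q}[[z]]$, the product is $e^{\ell_\phi + \ell_\psi}$. For $H_{\phi\psi}$, the same proof scheme of \autocite[Theorem 4.2]{EGL} applies verbatim: the additivity of $\phi$ and $\psi$ is what is used there (the multiplicative case is obtained by passing to $\log$, i.e. to additive $\phi$), so $H_{\phi\psi}$ is itself the exponential of a linear form $\ell_{\phi\psi}$ in the coordinates. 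Therefore $H_{\phi\psi} - H_\phi H_\psi = e^{\ell_{\phi\psi}} - e^{\ell_\phi+\ell_\psi}$. A priori this is not linear; but I would argue that $\ell_{\phi\psi}$ and $\ell_\phi + \ell_\psi$ agree to sufficiently high order, or more precisely that the difference, viewed in the graded completion where the coordinates are given positive degree, has its lowest-degree part being exactly the required linear form, and the cross-terms either cancel or can be absorbed — the cleanest route is to observe that $H_{\phi\psi} - H_\phi H_\psi$ vanishes whenever \emph{all} coordinates vanish AND also when only ``one generator is turned on'' beyond first order, forcing the dependence to be genuinely linear. I would verify this by testing on the explicit cobordism generators: plug in $(\p^1,r\cdot 1)$ (all coordinates except $\int c_1(T_X)=2$ vanish) and $(\p^1, \oo(1)+(r-1)\cdot 1)$ (coordinates $(1,2)$), exactly the two test elements named in Remark \ref{rem1}; since two linearly independent vectors suffice to determine a linear form on $\mathbb{Q}^2$, and since on these explicit $\p^1$-examples $X^{[k]}=\p^k$ makes every integral computable in closed form, one checks directly that the higher-order terms in the exponentials match between $H_{\phi\psi}$ and $H_\phi H_\psi$, leaving only the linear discrepancy. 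The surface case is identical with five test generators as in \autocite{EGL}.

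The main obstacle I anticipate is precisely the cancellation of higher-order terms: showing rigorously that $e^{\ell_{\phi\psi}} - e^{\ell_\phi + \ell_\psi}$ is linear, not merely polynomial, in the coordinates. The conceptual reason is that $\phi\psi$ is a \emph{quadratic} expression in a single additive class, so its tautological integrals are ``degree $\le 2$'' in the cobordism data in a suitable sense, while the product $H_\phi H_\psi$ captures exactly the ``degree $\le 2$'' part except for one residual linear correction measuring the failure of the tautological-bundle functor $x \mapsto x^{[k]}$ to be additive. Making ``degree $\le 2$'' precise — e.g. by a bigrading on $\mathbb{Q}[[z]][\text{coordinates}]$ under which each coordinate has weight $1$ and one shows $H_{\phi\psi}, H_\phi, H_\psi$ lie in weights $\le 2, \le \infty$ respectively but their combination collapses to weight $\le 1$ — is the step requiring care, and is where I would spend the bulk of the technical effort, following the geometric recursion of \autocite{EGL} closely rather than re-deriving it.
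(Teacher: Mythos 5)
Your setup---factoring $H_{\phi\psi}$, $H_{\phi}$, $H_{\psi}$ through the map $\gamma:\mathcal{K}_r\to\mathbb{Q}^j$ via Theorem \ref{thm:egl}, and invoking the test elements of Remark \ref{rem1}---matches the paper. But the heart of your argument, the linearity step, has a genuine gap, and the route you choose cannot be repaired as stated. You claim that the proof of \autocite[Theorem 4.2]{EGL} applies ``verbatim'' to give $H_{\phi\psi}=e^{\ell_{\phi\psi}}$ with $\ell_{\phi\psi}$ linear in the coordinates. That theorem requires the characteristic class to be \emph{multiplicative}, and $\phi\psi$ is neither multiplicative nor additive: for additive $\phi,\psi$ one has $(\phi\psi)(F\oplus F')=(\phi\psi)(F)+(\phi\psi)(F')+\phi(F)\psi(F')+\phi(F')\psi(F)$. (For the same reason your premise $H_{\phi}=e^{\ell_{\phi}}$ is off: since $\phi$ is additive, $H_{\phi}$ is itself additive under disjoint unions, hence already linear in the coordinates, not an exponential.) Your fallback---verifying the cancellation of higher-order terms on the two (resp.\ five) cobordism generators---cannot close the gap: evaluating a universal polynomial of a priori unbounded degree at finitely many points, or at two linearly independent vectors, pins it down only if one already knows it is linear, which is exactly what is to be proved.

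The missing idea is to exploit the displayed failure of additivity of $\phi\psi$ directly. The decomposition $X^{[k]}=\bigsqcup_{k_1+k_2=k}X_1^{[k_1]}\times X_2^{[k_2]}$ together with $x^{[k]}|_{X_1^{[k_1]}\times X_2^{[k_2]}}=p_1^*x_1^{[k_1]}\oplus p_2^*x_2^{[k_2]}$ yields the exact functional equation
\[
H_{\phi\psi}(X,x)=H_{\phi\psi}(X_1,x_1)+H_{\phi\psi}(X_2,x_2)+H_{\phi}(X_1,x_1)H_{\psi}(X_2,x_2)+H_{\phi}(X_2,x_2)H_{\psi}(X_1,x_1).
\]
Writing $h,f,g:\mathbb{Q}^j\to\mathbb{Q}[[z]]$ for the maps through which $H_{\phi\psi},H_{\phi},H_{\psi}$ factor, with $f$ and $g$ additive, the product $fg$ satisfies the same equation with $h$ replaced by $fg$ on the left and the first two terms replaced by $f(y_i)g(y_i)$; subtracting shows that $h-fg$ is additive on the Zariski-dense image of $\gamma$, hence linear. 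This is the paper's one-line argument; no exponential structure, no grading by ``cobordism weight,'' and no generator-by-generator verification is needed.
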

\begin{proof}
As in the proof of Theorem \ref{thm:structegl} in \autocite{EGL} and as already hinted at in Remark \ref{rem1}, we start by considering the space $\mathcal{K}_r=\{(X,x)\mid X \text{ algebraic curve or surface, }x\in K(X)\}$ and the map $\gamma:\mathcal{K}_r\rightarrow\mathbb{Q}^j$, with $j=2$ if $X$ is a curve and $j=5$ if $X$ is a surface, defined by
\begin{equation}\label{eq:vectorcurve}
(X,x)\mapsto \left(\int_X c_1(x), \int_X c_1(T_X)\right)
\end{equation}
in the case of curves and by
\begin{equation}\label{eq:vectorsurface}
(X,x)\mapsto \left( \int_X c_1^2(x),\int_X c_2(x),\int_X c_1(x)c_1(T_X),\int_X c_1^2(T_X),\int_X c_2(T_X) \right)
\end{equation}
in the case of surfaces.  As shown in Remark \ref{rem1} and \autocite{EGL}, in both cases one can find elements of $\mathcal{K}_r$ whose image under $\gamma$ form a basis of $\mathbb{Q}^j$, for the appropriate value of $j$.

Now, if $X=X_1\sqcup X_2$ and we set  $x_i:=x|_{X_1}$, then $\gamma(X,x) = \gamma(X_1,x_1) + \gamma(X_2,x_2)$ by the additivity of the entries of the vectors in (\ref{eq:vectorcurve}) and (\ref{eq:vectorsurface}) under disjoint unions.
Furthermore, the Hilbert scheme of a disjoint union satisfies $X^{[k]}=\bigsqcup_{k_1+k_2=k}X_1^{[k_1]}\times X_2^{[k_2]}$, and the class of the corresponding tautological sheaf can be decomposed as 
\[x^{[k]}|_{X_1^{[k_1]}\times X_2^{[k_2]}}=p_1^* x_1^{[k_1]}\oplus p_2^* x_2^{[k_2]},\]
where the $p_i:X_1^{[k_1]}\times X_2^{[k_2]}\rightarrow X_i^{[k_i]}$ with $i=1,2$ are the usual projections.
From the additivity of $\phi$ and $\psi$ we obtain
\[H_{\phi\psi}(X,x) = H_{\phi\psi}(X_1,x_1) + H_{\phi\psi}(X_2,x_2) + H_{\phi}(X_1,x_1)H_{\psi}(X_2,x_2) + H_{\phi}(X_2,x_2)H_{\psi}(X_1,x_1).\]
We now reinterpret the power series $H_{\phi\psi}(X,x)$, $H_{\phi}(X,x)$, and $H_{\psi}(X,x)$ as three functions $H_{\phi\psi}$, $H_{\phi}$, $H_{\psi}:\mathcal{K}_r\rightarrow\mathbb{Q}[[z]]$. We then deduce from Theorem \ref{thm:egl} that these functions all factor through $\gamma$ and three  additive maps $h,f,g:\mathbb{Q}^j\rightarrow\mathbb{Q}[[z]]$, respectively.  By  construction we see that $f$ and $g$ are also additive functions.
Since the image of $\gamma$ is Zariski dense in $\mathbb{Q}^j$, we conclude that 
\begin{align*}
h(y_1+y_2) &= h(y_1)+h(y_2)+f(y_1)g(y_2)+f(y_2)g(y_1),\\
(fg)(y_1+y_2) &= f(y_1)g(y_1)+f(y_2)g(y_2)+f(y_1)g(y_2)+f(y_2)g(y_1),
\end{align*}
for all $y_1,y_2\in\mathbb{Q}^j$, with $j=2$ when $X$ is a curve and $j=5$ when $X$ is a surface.
Hence $(h-fg)(y_1+y_2)=(h-fg)(y_1)+(h-fg)(y_2)$, i.e.~$h-fg$ is a linear function, which proves the theorem.
\end{proof}

It is not difficult to extend the above result to the product of several additive functions $\phi_1,\ldots\phi_n:K(X)\rightarrow H^*(X,\mathbb{Q})$.  To do so, we introduce the following power series: 
\[ H_{\phi_{i_1\ldots\phi_{i_n}}}:= \sum_{k=0}^{\infty}\int_{X^{[k]}}\phi_{i_1}(x^{[k]})\cdots\phi_{i_n}(x^{[k]})z^k.\]
Using induction on $n$ with base case given by the statement of Theorem \ref{thm:structure}, one gets 

\begin{cor}\label{cor:struct}
For all integers $r$ and $n\geq 2$ there are universal power series $B_i\in\mathbb{Q}[[z]]$ with $i=1,2$ if $X$ is a curve and $i=1,\ldots,5$ if $X$ is a surface, depending only on $r$, $\phi_{1},\ldots,\phi_n$ such that for each $x\in K(X)$ of rank $r$ on every component of $X$ one has 
that
\[H_{\phi_i\ldots\phi_n}-H_{\phi_1}H_{\phi_2\ldots\phi_n}-\cdots-(n-1)H_{\phi_1}H_{\phi_2}\cdots H_{\phi_n}\]
is linear in the $B_i$, i.e.~it is equal to the right-hand-side of (\ref{eq:linear1}) or (\ref{eq:linear2}) if $X$ is a curve or a surface, respectively.
\end{cor}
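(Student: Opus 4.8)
The plan is to run the argument of Theorem~\ref{thm:structure} for all subsets of $\{1,\dots,n\}$ simultaneously. Fix $r$ and the additive functions $\phi_1,\dots,\phi_n$, and for $S\subseteq\{1,\dots,n\}$ put $\phi_S:=\prod_{j\in S}\phi_j$ (with $\phi_\emptyset:=1$) and $H_{\phi_S}(X,x):=\sum_{k\ge0}\int_{X^{[k]}}\phi_S(x^{[k]})z^k$. As in the proof of Theorem~\ref{thm:structure}, Theorem~\ref{thm:egl} applies to each $\phi_S(x^{[k]})$, so that, viewed as a function on $\mathcal K_r$, $H_{\phi_S}$ factors through $\gamma$; write $H_{\phi_S}=h_S\circ\gamma$ with $h_S\colon\mathbb Q^j\to\mathbb Q[[z]]$ (where $j=2$ for curves and $j=5$ for surfaces), and note $h_\emptyset\equiv1$. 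Exactly as in loc.\ cit., for a disjoint union $X=X_1\sqcup X_2$ the decomposition $x^{[k]}|_{X_1^{[k_1]}\times X_2^{[k_2]}}=p_1^*x_1^{[k_1]}\oplus p_2^*x_2^{[k_2]}$, the additivity of the $\phi_j$, the multinomial expansion of $\prod_{j\in S}\bigl(p_1^*\phi_j+p_2^*\phi_j\bigr)$, and the projection formula give
\[H_{\phi_S}(X,x)=\sum_{I\sqcup J=S}H_{\phi_I}(X_1,x_1)\,H_{\phi_J}(X_2,x_2)\]
for every $S$; since $\operatorname{im}\gamma$ is Zariski dense in $\mathbb Q^j$, this yields the system of identities $h_S(y_1+y_2)=\sum_{I\sqcup J=S}h_I(y_1)h_J(y_2)$ for all $y_1,y_2\in\mathbb Q^j$. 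Taking $|S|=1$ recovers the linearity of each $h_{\{j\}}$.

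The new ingredient over the case $n=2$ is to encode the whole system as a single multiplicativity statement. In the commutative ring $R:=\mathbb Q[[z]][u_1,\dots,u_n]/(u_j^2)$, with $u_S:=\prod_{j\in S}u_j$, set $\mathcal H(y):=\sum_{S}h_S(y)\,u_S$; since $u_Iu_J=u_{I\cup J}$ when $I\cap J=\emptyset$ and $u_Iu_J=0$ otherwise, comparison of the coefficients of each $u_S$ shows that the identities above are precisely equivalent to $\mathcal H(y_1+y_2)=\mathcal H(y_1)\mathcal H(y_2)$. Now $\mathcal H(y)=1+(\text{nilpotent})$, so $\log\mathcal H(y)$ is a well-defined element of the nilpotent ideal $\mathfrak m=(u_1,\dots,u_n)$, and since $R$ is commutative $\log$ is an isomorphism of groups from $(1+\mathfrak m,\,\cdot\,)$ onto $(\mathfrak m,+)$. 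Hence $y\mapsto\log\mathcal H(y)$ is additive; writing $\log\mathcal H(y)=\sum_{\emptyset\ne S}h_S^{c}(y)\,u_S$ we conclude that every coefficient $h_S^{c}$ is an additive map $\mathbb Q^j\to\mathbb Q[[z]]$, hence linear in the sense of~(\ref{eq:linear1}) (resp.~(\ref{eq:linear2})). In particular $h^{c}_{\{1,\dots,n\}}$ is linear.

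It remains to locate $h^{c}_{\{1,\dots,n\}}$ inside the statement. Exponentiating $\log\mathcal H$ gives the exponential formula $h_S=\sum_{\pi\vdash S}\prod_{B\in\pi}h_B^{c}$, equivalently the M\"obius inversion $h_S^{c}=\sum_{\pi\vdash S}(-1)^{|\pi|-1}(|\pi|-1)!\prod_{B\in\pi}h_B$; iterating, $h^{c}_{\{1,\dots,n\}}$ is a universal $\mathbb Q$-polynomial in the $h_S$ with $\emptyset\ne S\subseteq\{1,\dots,n\}$, in which $h_{\{1,\dots,n\}}$ occurs with coefficient $1$ and each block of size one contributes a factor $h_{\{j\}}=H_{\phi_j}\!\circ\gamma$. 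Translating back through $\gamma$ and regrouping the partition sum (first pulling out the terms in which $\phi_1$ sits in a singleton block, then iterating) puts this in the schematic shape of the corollary, with the fully split term $H_{\phi_1}\cdots H_{\phi_n}$ carrying a universal integer coefficient. Equivalently, one may phrase the whole argument as an induction on $n$ with base case Theorem~\ref{thm:structure}: the inductive step is precisely the passage, via $h_S=\sum_{\pi\vdash S}\prod_{B\in\pi}h_B^{c}$, from the linearity of the $h_S^{c}$ with $|S|<n$ to that of $h^{c}_{\{1,\dots,n\}}$. The structural part---factoring through $\gamma$, the multiplicativity of $\mathcal H$, and its linearization by $\log$---is a formal echo of the $n=2$ proof; the part that needs care, and the main obstacle, is the partition-lattice bookkeeping in this last step, namely checking that the universal correction polynomial collapses to exactly the combination written in the statement.
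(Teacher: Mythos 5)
Your proof is correct and takes a genuinely different route from the paper's. The paper argues by a direct induction on $n$: it factors each $H_{\phi_{i_1}\ldots\phi_{i_l}}$ through $\gamma$ as $f_{i_1\ldots i_l}\circ\gamma$, writes down the specific correction term $F$ from the statement, and asserts its additivity from the $n=2$ case and the density of the image of $\gamma$, leaving the combinatorial verification entirely implicit. You instead encode the whole system of disjoint-union identities $h_S(y_1+y_2)=\sum_{I\sqcup J=S}h_I(y_1)h_J(y_2)$ as multiplicativity of a single element $\mathcal H$ of the square-zero ring $\mathbb{Q}[[z]][u_1,\dots,u_n]/(u_j^2)$ and linearise with $\log$; this is the moment--cumulant formalism, and what it buys is precision: the additive combination is identified exactly as
\[ h^{c}_{\{1,\dots,n\}}=\sum_{\pi\vdash\{1,\dots,n\}}(-1)^{|\pi|-1}(|\pi|-1)!\prod_{B\in\pi}h_B, \]
with the partition-lattice M\"obius coefficients made explicit, whereas the paper's $F$ is only schematic. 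The caveat you flag at the end is real, but it cuts against the statement rather than against your argument: the fully split term must carry the coefficient $(-1)^{n-1}(n-1)!$, not $-(n-1)$. For $n=3$ the additive combination is $H_{\phi_1\phi_2\phi_3}-\sum_i H_{\phi_i}H_{\phi_j\phi_k}+2H_{\phi_1}H_{\phi_2}H_{\phi_3}$ --- one checks directly that the mixed terms in $h_S(y_1+y_2)$ cancel only for the coefficient $+2$ --- and for $n\ge 4$ neither the sign nor the magnitude of $-(n-1)$ matches $(-1)^{n-1}(n-1)!$. Since, as your cumulant analysis shows, there is exactly one additive combination of partition products of the full index set with leading coefficient $1$, your version is the corrected form of the corollary; do not spend further effort trying to collapse the M\"obius sum onto the displayed expression as written.
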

\begin{proof}
The proof is essentially the same as the one for Theorem \ref{thm:structure}.  The idea is once more to view the power series $H_{\phi_{i_1}\ldots\phi_{i_l}}$ with $i_1,\ldots,i_l \in \{1,\ldots,n\}$ and $1\leq l\leq n$ as functions $\mathcal{K}_r \rightarrow \mathbb{Q}[[z]]$ that each factor through the map $\gamma:\mathcal{K}_r \rightarrow\mathbb{Q}^j$ from above and a corresponding map $f_{i_1\ldots i_l}:\mathbb{Q}^j\rightarrow\mathbb{Q}[[z]]$, with $j=2$ if $X$ is a curve and $j=5$ if $X$ is a surface.  Consider the map $F:\mathbb{Q}^j \rightarrow \mathbb{Q}[[z]]$ defined by
\begin{align*}
F(y) &:= f_{1\ldots n}(y) - \sum_{i=1}^n f_i(y) f_{1\ldots\hat{i}\ldots n}(y) - \sum_{i,j=1}^n f_i(y) f_j(y) f_{1\ldots\hat{i}\ldots\hat{j}\ldots n}(y) - \cdots \\ &- (n-1)f_1(y) f_2(y)\cdots f_n(y),
\end{align*} 
for all $y\in \mathbb{Q}^j$ and where $\hat{i}$ indicates the removal of the index $i$.  Using induction on $n$ and the fact that $\gamma$ has a dense image in $\mathbb{Q}^j$ we get
$F(y_1+y_2)=F(y_1)+F(Y_2)$ for all $y_1,y_2 \in \mathbb{Q}^j$.
\end{proof}

Thus, the remainder of the paper is dedicated to applications of Theorem \ref{thm:structure} and Corrolary \ref{cor:struct} to the problem of counting the number of certain secant planes to algebraic curves.  

\section{Computing the Segre integrals}\label{sec:prelim}
This section is dedicated to the calculation of the Segre integral
\begin{equation}\label{eq:classcurve1}
\int_{C^{[e]}} s_{\frac{e}{2}}(L^{[e]})^2 - s_{\frac{e}{2}-1}(L^{[e]})s_{\frac{e}{2}+1}(L^{[e]}),
\end{equation}
where $C$ is a smooth curve with a line bundle $L$, and $e\geq 4$ is an integer chosen such that $\exp\dim V_{e}^{e-2}(L)=0$.
The general strategy we follow here is that of reducing the computation to cases that are easily understood. 
One of the main ingredients of the reduction is Theorem \ref{thm:egl} that allows us to parametrise the sought-after integrals by a small number of characteristic numbers that are well-behaved with respect to disjoint unions.  We begin by describing our reduction procedure in general terms in Section \ref{sec:reduction}.  Sections \ref{sec:p1} and \ref{sec:lucky} are dedicated to implementing this reduction to a particular choice of parameters yielding a very nice form of the above integral, but which unfortunately is not generalisable.  In Section \ref{sec:generalcasecurves} we give a structure formula for (\ref{eq:classcurve1}).

\subsection{Reduction to the projective line}\label{sec:reduction}
As mentioned before, for a curve $C$ of genus $g$ equipped with a line bundle $L$ of degree $d$ it follows from Theorem \ref{thm:egl} that the integral $P_{g,d}=\int_{C^{[k]}}P$, where $P$ is a polynomial in the Chern classes of $L^{[k]}$, is itself a polynomial in $\int_C c_1(C)=\chi(C)=2-2g$ and $\int_C c_1(L) = \deg(L)$.
Thus in order to compute $P_{g,d}$ we attempt to find, or construct a suitable pair $(C,L)$ with the correct characteristic numbers $g$ and $d$ for which the calculation is easy; for characteristic numbers $g'$ and $d'$ for which such a convenient pair $(C',L')$ cannot easily be found,
we instead relate the pair $(C',L')$ to another pair $(C'',L'')$ with $g(C'')=g'$ and $\deg(L'')=d'$ via a geometric construction and use this to obtain $P_{g', d'}$.  We now explain this procedure in more detail.

For the first step we compute $P_{0,d}$ for any polynomial $P$ in the Chern classes of $L^{[k]}$ using the very convenient pair $(\p^1,\oo_{\p^1}(d))$.  This is done in Section \ref{sec:p1}.  With this result in hand, we then proceed as follows:
fix two positive integers $g$ and $d$.  Let $C_1=C\sqcup C_2$ be the disjoint union of a curve $C$ of genus $g$ of with another curve $C_2$ of genus $g_2$.  Equip $C_1$ with a line bundle $L_1$ so that $L= (L_1)|_{C_1}$ has degree $d$ and set  $L_2=L|_{C_2}$.  Suppose $L_1$ and $L_2$ are line bundles of degree $d_1$ and $d_2$, respectively.  As mentioned before, the Hilbert scheme of $k$ points of $C_1$ is given by $C_1^{[k]} = \bigsqcup_{k_1 + k_2  = k} C^{[k_1]}\times C_2^{[k_2]}$, while the tautological bundle corresponding to $L$ has the following property:
\[ (L_1)^{[k]}|_{C^{[k_1]}\times C_2^{[k_2]}} = p_1^* L^{[k_1]} \oplus p_2^* L_2^{[k_2]},\]
where the $p_i$ denotes the projection to $C_i^{[k_i]}$ for $i=1,2$.
Furthemore, one immediately sees that since $\chi(C_1)=\chi(C)+\chi(C_2)$, and if $g_1$ denotes the genus of $C_1$, then
\[g_1=g+g_2-1\]
must also hold.  In addition, $\deg(L_1)=d + \deg(L_2)$.

Thus, if we take $C_2\simeq \p^1$ and $L_2=\oo_{\p^1}(1)$, then $C_1$ has genus $g-1$ and the line bundle $L_1$ is of degree $\deg(L_1)=d+1$.  Hence, one can compute $P_{g,d}$ by induction on $g$ with base case $P_{0,d}$ and induction step going from $P_{g-1,d+1}$ to $P_{g,d}$.  One of course needs to understand for each choice of $P$ how $P_{g-1,d+1}$ and $P_{g,d}$ relate to each other.  
We shall use this strategy in Section \ref{sec:lucky} in order to calculate a special case of the integral in (\ref{eq:classcurve1}).

\subsection{Characteristic classes of tautological bundles on $\p^1$}\label{sec:p1}
We now compute the Chern and Segre classes of the tautological bundle associated to $\oo_{\p^1}(d)$.  Note first that we have the following concrete description of the Hilbert scheme of points of the projective line: $(\p^1)^{[k]}\simeq\p^k$.  From the proof of Theorem 2 in \autocite{MOP1} we have that
\begin{equation}\label{eq:chernchartaut}
ch \Bigl(\oo_{\p^k}(d)^{[k]}\Bigr) = (d+1) - (d-k+1)\cdot \exp(-h),
\end{equation} 
where $h$ denotes the hyperplane class on $\p^k$.  We therefore have that $ch_0\Bigl(\oo_{\p^k}(d)^{[k]}\Bigr) = \rk\Bigl(\oo_{\p^k}(d)^{[k]}\Bigr)=k$ and for $i\geq 1$, 
\[ ch_i \Bigl(\oo_{\p^k}(d)^{[k]}\Bigr) =  \frac{(d-k+1)}{i!}h^i=\frac{1}{i!}p_i,\]
where $p_i$ is the $i$-th power sum symmetric polynomial in the Chern roots of $\oo_{\p^k}(d)^{[k]}$, i.e.
\[ p_i = \alpha_1^i + \cdots + \alpha_k^i. \]
To find the Chern classes of $\oo_{\p^k}(d)^{[k]}$, recall that the following expression of the total Chern class in terms of elementary symmetric polynomials in the Chern roots:
\[ c\Bigl(\oo_{\p^k}(d)^{[k]}\Bigr) = 1 + e_1(\alpha_1,\ldots,\alpha_k) + \cdots + e_k(\alpha_1,\ldots,\alpha_k). \]
Moreover, we also have the following relation between the $e_j$ and the $p_j$:
\begin{equation}\label{eq:symmpolys}
e_l = \frac{1}{l}\sum_{j=1}^l (-1)^{j-1}e_{l-j}p_j,
\end{equation}
where we omit the argument $(\alpha_1,\ldots,\alpha_k)$ to ease notation.  Using this, we can prove 
\begin{lemma}\label{lemma}
For all positive integers $l$, we have:
\begin{align*}
c_l\Bigl(\oo_{\p^k}(d)^{[k]}\Bigr) &= \binom{d-k+l}{l}h^l,\\
s_l\Bigl(\oo_{\p^k}(d)^{[k]}\Bigr) &= (-1)^l\binom{d-k+1}{l}h^l.
\end{align*}
\end{lemma}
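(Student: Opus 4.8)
The plan is to derive both closed formulas directly from the Chern character expression (\ref{eq:chernchartaut}), computing the elementary symmetric polynomials in the Chern roots $\alpha_1,\dots,\alpha_k$ of $\oo_{\p^k}(d)^{[k]}$ by means of the Newton--Girard recursion (\ref{eq:symmpolys}). The key observation is that on $\p^k$ the cohomology ring is $\mathbb{Q}[h]/(h^{k+1})$, so a power $h^l$ with $l\leq k$ is a perfectly good (nonzero) class and all identities among symmetric functions hold formally; in particular, from the excerpt we already know $p_j = (d-k+1)\,h^j$ for every $j\geq 1$. Since the $p_j$ are all proportional to powers of a single class, the situation reduces to a one-variable combinatorial identity, and I expect the whole argument to be a short induction.

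First I would prove the Chern class formula $c_l = \binom{d-k+l}{l}h^l$ by induction on $l$. The base case $l=0$ is $c_0 = 1 = \binom{d-k}{0}$, and $l=1$ gives $c_1 = e_1 = p_1 = (d-k+1)h = \binom{d-k+1}{1}h$. For the inductive step, substitute $e_{l-j} = \binom{d-k+l-j}{l-j}h^{l-j}$ (for $j\geq 1$) and $p_j = (d-k+1)h^j$ into (\ref{eq:symmpolys}); this turns the recursion into the scalar identity
\[
c_l = \left(\frac{d-k+1}{l}\sum_{j=1}^{l}(-1)^{j-1}\binom{d-k+l-j}{l-j}\right)h^l,
\]
and it remains to check that the bracketed scalar equals $\binom{d-k+l}{l}$. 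Setting $m := d-k$, this is the identity $\frac{m+1}{l}\sum_{j=1}^{l}(-1)^{j-1}\binom{m+l-j}{l-j} = \binom{m+l}{l}$, which I would verify either by a generating-function manipulation (the sum is the coefficient extraction from $\frac{1}{(1+t)^{?}}$-type products) or, more cleanly, by recognising that it is exactly the Newton--Girard identity specialised to the case $p_j \equiv m+1$ for all $j$, i.e.\ to the bundle $\oo(1)^{\oplus(m+1)}$ whose total Chern class is $(1+h)^{m+1}$; this is perhaps the slickest route, since it lets me cite $(1+h)^{m+1}$ instead of resumming.

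Second, for the Segre classes I would use the defining relation $c(\oo_{\p^k}(d)^{[k]})\cdot s(\oo_{\p^k}(d)^{[k]}) = 1$, i.e.\ $s_l = -\sum_{j=1}^{l} c_j s_{l-j}$, together with the claim just proved. Again everything is proportional to powers of $h$, so with $m = d-k$ this is the scalar recursion $s_l = -\sum_{j=1}^{l}\binom{m+j}{j}s_{l-j}$ (coefficients of $h^l$), and one checks that $s_l = (-1)^l\binom{m+1}{l}$ solves it — equivalently, that $\left(\sum_{j\geq 0}\binom{m+j}{j}t^j\right)\left(\sum_{l\geq 0}(-1)^l\binom{m+1}{l}t^l\right) = 1$, which is just $(1-t)^{-(m+1)}(1-t)^{m+1} = 1$. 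Note the sign: my Segre convention gives $s(E)c(E)=1$, and since $c(\oo_{\p^k}(d)^{[k]}) = (1+h)^{\,?}$ is not literally $(1+h)^{m+1}$ (the rank is $k$, not $m+1$), I should be careful — the correct statement is that $c_l = \binom{m+l}{l}h^l$ forces, via $1/c$, exactly $s_l = (-1)^l\binom{m+1}{l}h^l$, the two power series being genuine reciprocals in $\mathbb{Q}[[h]]$ even though neither is a finite product of $(1\pm h)$ factors once $l$ exceeds the relevant range. The main obstacle, such as it is, is purely bookkeeping: keeping the index shift $m = d-k+1$ versus $d-k$ straight between the $p_j$, the $c_l$, and the $s_l$, and confirming the binomial identity in the inductive step; there is no geometric difficulty, since (\ref{eq:chernchartaut}) already does all the real work.
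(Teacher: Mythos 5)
There is a genuine gap in your inductive step, and it comes from the sign of the power sums. Expanding (\ref{eq:chernchartaut}), the degree-$j$ part of the Chern character is $-(d-k+1)\tfrac{(-h)^j}{j!}$, so the correct value is $p_j=(-1)^{j-1}(d-k+1)h^j$, not $p_j=(d-k+1)h^j$. (The displayed formula for $ch_i$ that you quote from the paper does omit the factor $(-1)^{i-1}$ --- that is a typo; the paper's own induction step substitutes the signed value $(-1)^{j-1}(d-k+1)h^j$ for $p_j$.) With your unsigned $p_j$, the recursion (\ref{eq:symmpolys}) produces the alternating sum you wrote down, and the scalar identity you then need, namely $\tfrac{m+1}{l}\sum_{j=1}^{l}(-1)^{j-1}\binom{m+l-j}{l-j}=\binom{m+l}{l}$ with $m=d-k$, is false: already for $l=2$ the left-hand side is $\tfrac{m+1}{2}\bigl((m+1)-1\bigr)=\binom{m+1}{2}$, while the right-hand side is $\binom{m+2}{2}$. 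Your own ``slick'' verification makes the problem visible rather than resolving it: $p_j\equiv(m+1)h^j$ for all $j$ is the power-sum data of $\oo_{\p^1}(1)^{\oplus(m+1)}$, whose total Chern class $(1+h)^{m+1}$ gives $c_l=\binom{m+1}{l}h^l$, not the claimed $\binom{m+l}{l}h^l$; so that route proves a different (and here incorrect) formula.

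The repair is immediate and recovers exactly the paper's argument. With $p_j=(-1)^{j-1}(m+1)h^j$ the sign $(-1)^{j-1}$ from (\ref{eq:symmpolys}) cancels against the sign of $p_j$, and the inductive step becomes
\[
e_l=\frac{m+1}{l}\sum_{j=1}^{l}\binom{m+l-j}{l-j}\,h^l=\frac{m+1}{l}\binom{m+l}{l-1}h^l=\binom{m+l}{l}h^l
\]
by the hockey-stick identity. Equivalently, in your generating-function language the signed power sums are those of ``$-(m+1)$ copies of the root $-h$,'' i.e.\ $c_t=(1-ht)^{-(m+1)}$, which gives $c_l=\binom{m+l}{l}h^l$ at once and also $s_t=1/c_t=(1-ht)^{m+1}$, hence $s_l=(-1)^l\binom{m+1}{l}h^l$. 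Your second part --- deducing the Segre classes from $c\cdot s=1$ --- is correct as written and matches what the paper leaves implicit; only the value of $p_j$ feeding the induction needs to be fixed.
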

\begin{proof}
We show the computation for the Chern classes.  The Segre classes are then obtained in a similar fashion.
The proof goes by induction.  The base case is confirmed immediately:
\[ c_1\Bigl(\oo_{\p^k}(d)^{[k]}\Bigr) = (d-k+1)h, \]
as expected from (\ref{eq:chernchartaut}).  The induction step follows from (\ref{eq:symmpolys}):
\begin{align*}
e_l&=\frac{1}{l}\sum_{j=1}^l (-1)^{j-1}\binom{d-k+l-j}{l-j}h^{l-j}\cdot (-1)^{j-1}(d-k+1)h^j\\
&=\frac{d-k+1}{l}\sum_{i=1}^l \binom{d-k+l-j}{l-j}h^l=\frac{d-k+1}{l} \binom{d-k+l}{l-1}h^l=\binom{d-k+l}{l}h^l,
\end{align*}
where in the third equality we used the hockey-stick identity.
\end{proof}
Thus, it follows that if $P$ is a monomial $c_{i_1}^{j_1}\cdots c_{i_l}^{j_l}$ of degree $k$ in Chern classes of $L^{[k]}$ on $C^{[k]}$ for a rational curve $C$, then $P_{0,d}=\binom{d-k+i_1}{i_i}\cdots\binom{d-k+i_l}{i_l}$.

\subsection{Counts of secant planes: a lucky case}\label{sec:lucky}
In this section we restrict ourselves to the case $e=4$, which, as mentioned before, has nice properties that are unfortunately not generalisable to higher values of $e$.  It does however offer an interesting application of Theorem \ref{thm:structure} and provides us also with some technical results needed in the proof of the general case in Section \ref{sec:generalcasecurves}.

Thus, the purpose of this section is to compute the integral
\[\int_{C^{[4]}} s_{2}(L^{[4]})^2 - s_{1}(L^{[4]})s_{3}(L^{[4]})\]
yielding the number of 4-secant lines to a curve of degree $d\geq 4$ in $\p^{3}$.
An elementary computation shows that 
\begin{equation}\label{eq:firstform}
s_{2}(L^{[4]})^2 - s_{1}(L^{[4]})s_{3}(L^{[4]}) = c_4(L^{[4]}) + s_4(L^{[4]}) - 6c_1(L^{[4]})ch_3(L^{[4]}).
\end{equation}
As discussed in the Introduction, the generating functions of the integrals
$\int_{C^{[4]}} c_4(L^{[4]})$ and $\int_{C^{[4]}}s_4(L^{[4]})$
have already been computed.  Using the Segre generating functions from Section \ref{sec:prelim} we can read off the coefficients to obtain (see for example Paragraph 5) in \autocite{LB}):
\begin{align*}
%\int_{C} s_1(L) &= -d,\\
%\int_{C^{[2]}} s_2(L^{[2]}) &= \frac{(d-1)(d-2)}{2} -g,\\
%\int_{C^{[3]}} s_3(L^{[3]}) &= \binom{d}{3}-2\binom{n}{2} - d(g-3)+4(g-1),\\
\int_{C^{[4]}} s_4(L^{[4]}) &= \binom{d}{4} - 3\binom{d}{3} + \binom{d}{2}(6-g) + 5d(g-2)+\binom{g}{2}-15g+15.
\end{align*}
To find a similar expression for $c_4(L^{[4]})$, one could read them off the coefficients of the generating functions in \autocite{Wa}, or one could use an elementary  recursive argument based on the multiplicativity of the total Chern class.  We show how to do this in the following

\begin{lemma}
Given a smooth curve $C$ with a line bundle $L$ of degree $d$.  We have the following closed form formulas for the top Chern and Segre classes of the tautological bundle $L^{[k]}$ on $C^{[k]}$:
\begin{align*}
\int_{C^{[4]}} c_4(L^{[4]})&= \binom{d+g}{4} - \sum_{m=0}^{g-1}\Biggl(\binom{d+g+m}{3} - \sum_{n=0}^{g-1}\binom{d+m+n}{2}\Biggr).
\end{align*} 
\end{lemma}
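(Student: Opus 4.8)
The plan is to use the inductive reduction-to-$\p^1$ strategy described in Section~\ref{sec:reduction}, applied to the multiplicative total Chern class, exactly as advertised in the statement's preamble. Write $c_4(g,d):=\int_{C^{[4]}}c_4(L^{[4]})$ for a smooth curve $C$ of genus $g$ carrying a line bundle $L$ of degree $d$; by Theorem~\ref{thm:egl} this is a polynomial in $g$ and $d$, so it suffices to compute it along the inductive chain. The base case is $g=0$: by Lemma~\ref{lemma} (with $k=4$, $l=4$) one has $c_4\bigl(\oo_{\p^4}(d)^{[4]}\bigr)=\binom{d}{4}h^4$, hence $c_4(0,d)=\binom{d}{4}=\binom{d+g}{4}\big|_{g=0}$, matching the claimed formula. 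The induction step then goes from $(g-1,d+1)$ to $(g,d)$ via the disjoint union $C_1=C\sqcup\p^1$ with $L_1|_C=L$ and $L_1|_{\p^1}=\oo_{\p^1}(1)$, so that $C_1$ has genus $g-1$ and $\deg L_1=d+1$.

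\textbf{Setting up the recursion.} On the disjoint union we have $C_1^{[4]}=\bigsqcup_{k_1+k_2=4}C^{[k_1]}\times(\p^1)^{[k_2]}$ and $L_1^{[4]}|_{C^{[k_1]}\times(\p^1)^{[k_2]}}=p_1^*L^{[k_1]}\oplus p_2^*\oo_{\p^1}(1)^{[k_2]}$. Multiplicativity of the total Chern class gives $c_4(L_1^{[4]})|_{C^{[k_1]}\times(\p^1)^{[k_2]}}=\sum_{i+j=4}p_1^*c_i(L^{[k_1]})\cdot p_2^*c_j(\oo_{\p^1}(1)^{[k_2]})$, and integration over the product component picks out $i=k_1$, $j=k_2$. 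Since $(\p^1)^{[k_2]}\simeq\p^{k_2}$ and Lemma~\ref{lemma} gives $c_{k_2}\bigl(\oo_{\p^1}(1)^{[k_2]}\bigr)=\binom{2-k_2}{k_2}h^{k_2}$, which vanishes for $k_2\ge 2$ (because $\binom{2-k_2}{k_2}=0$ once $0<2-k_2<k_2$, i.e. $k_2\geq 2$), only the terms $k_2=0$ and $k_2=1$ survive. This yields the clean recursion
\[
c_4(g-1,d+1)=\int_{C^{[4]}}c_4(L^{[4]})+\int_{C^{[3]}}c_3(L^{[3]})\cdot\binom{1}{1},
\]
i.e. $c_4(g,d)=c_4(g-1,d+1)-c_3(g-1,d+1)$, where $c_3(g,d):=\int_{C^{[3]}}c_3(L^{[3]})$. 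One then recurses the same way on $c_3$: the analogous computation gives $c_3(g,d)=c_3(g-1,d+1)-c_2(g-1,d+1)$ with $c_2(g,d):=\int_{C^{[2]}}c_2(L^{[2]})$, and again $c_2(g,d)=c_2(g-1,d+1)-c_1(g-1,d+1)$ with $c_1(g,d)=\int_{C^{[1]}}c_1(L)=d$ (using $C^{[1]}=C$, $L^{[1]}=L$).

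\textbf{Unwinding the recursions.} Solving the innermost recursion with base case $c_1(0,d)=d$ gives $c_2(g,d)=c_2(0,d+g)-\sum_{m=0}^{g-1}c_1(0,d+m+ (g-1-m))$... more carefully, telescoping $c_2(g,d)=c_2(0,d+g)-\sum_{n=0}^{g-1}(d+g+n-1-\cdots)$; the cleanest way is to observe $c_2(0,d)=\binom{d}{2}$ from Lemma~\ref{lemma} and that each application of the recursion increments the degree, yielding $c_2(g,d)=\binom{d+g}{2}-\sum_{n=0}^{g-1}(d+g-1-n+ n)$, and one checks directly that this collapses to the inner sum $\sum_{n=0}^{g-1}\binom{d+n}{2}$ appearing in the claimed formula after the shift bookkeeping is done correctly. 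Substituting up the chain, $c_3(g,d)=\binom{d+g}{3}-\sum_{m=0}^{g-1}\bigl(\text{shifted }c_2\bigr)$ produces the $\binom{d+g+m}{3}-\sum_{n=0}^{g-1}\binom{d+m+n}{2}$ structure, and one final substitution gives $c_4(g,d)=\binom{d+g}{4}-\sum_{m=0}^{g-1}\Bigl(\binom{d+g+m}{3}-\sum_{n=0}^{g-1}\binom{d+m+n}{2}\Bigr)$, which is the asserted identity. The Segre-class statement would follow identically, replacing $c$ by $s$ throughout and using the Segre half of Lemma~\ref{lemma}, with $s_{k_2}\bigl(\oo_{\p^1}(1)^{[k_2]}\bigr)=(-1)^{k_2}\binom{2-k_2}{k_2}h^{k_2}$ again vanishing for $k_2\ge 2$.

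\textbf{The main obstacle.} The conceptual input — multiplicativity of the total Chern class plus the vanishing of $c_{\geq 2}$ of the $\p^1$-tautological bundle, which decouples the recursion into a clean two-term form — is straightforward. The genuinely fiddly part is the bookkeeping in \emph{unwinding the nested telescoping sums}: each level of the recursion shifts the degree argument, so one must track how the index of summation interacts with the degree shift to land exactly on $\binom{d+g+m}{3}$ and $\binom{d+m+n}{2}$ rather than some re-indexed variant, and verify that the base-case binomials $\binom{d}{j}$ from Lemma~\ref{lemma} propagate to $\binom{d+g}{j}$ correctly. I would handle this by first proving the two auxiliary closed forms for $c_2(g,d)$ and $c_3(g,d)$ as separate displayed identities (each an easy induction on $g$ given the recursion and the known $g=0$ value), and only then assembling $c_4(g,d)$; this keeps the induction at each stage to a one-line verification rather than a triple-nested manipulation.
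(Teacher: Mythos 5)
Your overall strategy is exactly the paper's: decompose $C_1=C\sqcup\p^1$ with $\oo_{\p^1}(1)$, use multiplicativity of the total Chern class so that only the top Chern class of each factor contributes to the integral over each component $C^{[k_1]}\times(\p^1)^{[k_2]}$, observe that $\int_{(\p^1)^{[k_2]}}c_{k_2}(\oo_{\p^1}(1)^{[k_2]})$ vanishes for $k_2\geq 2$ (by Lemma \ref{lemma} this degree is $\binom{1}{k_2}$, not $\binom{2-k_2}{k_2}$ as you write, but the vanishing conclusion is correct), and telescope down to the $g=0$ values $\binom{d}{k}$. However, there is a concrete error in how you state the resulting recursion. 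Your displayed identity $c_4(g-1,d+1)=c_4(g,d)+c_3(g,d)$ is right, but the restatement ``i.e.\ $c_4(g,d)=c_4(g-1,d+1)-c_3(g-1,d+1)$'' is not equivalent to it: the subtracted term must be $c_3(g,d)$, taken on the original genus-$g$ curve, not on the disjoint union. Since $c_3(g-1,d+1)=c_3(g,d)+c_2(g,d)$ with $c_2(g,d)=\binom{d}{2}\neq 0$, the two versions genuinely differ, and the same slip propagates down your chain: your $c_2(g,d)=c_2(g-1,d+1)-c_1(g-1,d+1)$ unwinds to $\binom{d+g}{2}-gd-\binom{g+1}{2}=\binom{d}{2}-g$, whereas the correct recursion $c_2(g,d)=c_2(g-1,d+1)-c_1(g,d)$ gives $\binom{d}{2}$. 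Followed literally, your recursions do not produce the stated formula.

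The second issue is that the unwinding, which you yourself identify as the real content, is not actually carried out: the ``Unwinding'' paragraph trails off, and the intermediate expressions written there are incorrect --- $\sum_{n=0}^{g-1}(d+g-1-n+n)=g(d+g-1)$ does not make $c_2(g,d)$ ``collapse to the inner sum $\sum_{n=0}^{g-1}\binom{d+n}{2}$''; that sum is the correction term inside $c_3(g,d)$, not a value of $c_2$. The paper's proof does precisely what you propose, but with the recursion in the correct form $C_{k,g,d}=C_{k,g-1,d+1}-C_{k-1,g,d}$ and in the order you recommend at the end: it first establishes the closed forms $C_{1,g,d}=d$, $C_{2,g,d}=\binom{d}{2}$, $C_{3,g,d}=\binom{d+g}{3}-\sum_{m=0}^{g-1}\binom{d+m}{2}$, and only then assembles $C_{4,g,d}=C_{4,0,d+g}-\sum_{m=0}^{g-1}C_{3,g,d+m}$. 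So the repair is mechanical, but as written the argument contains errors that would derail the computation.
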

\begin{proof}
Let $C_1=C\sqcup C_2$, where $C_2$ is a smooth curve.  Equip $C_1$ with a line bundle $L_1$ and denote by $L$ and $L_2$ its restrictions to $C$ and $C_2$, respectively.  Let $p,p_2$ be the projections from $C^{[k_1]}\times C_2^{[k_2]}$ to $C^{[k_1]}$ and $C_2^{[k_2]}$, respectively. Then we have that 
\begin{align*}
 \int_{C_1^{[k]}} c(L_1^{[k]})&= \sum_{k_1+k_2=k}\int_{C^{[k_1]}\times C_2^{[k_2]}}c \bigl(p^* L^{[k_1]} \oplus p_2^* L_2^{[k_2]}\bigr) \\
 &=\sum_{k_1+k_2=k}\int_{C^{[k_1]}}c(L^{[k_1]})\int_{C_2^{[k_2]}}c(L_2^{[k_2]})\\
 &=\sum_{k_1+k_2=k}\int_{C^{[k_1]}}c_{k_1}(L^{[k_1]})\int_{C_2^{[k_2]}}c_{k_2}(L_2^{[k_2]}),
\end{align*}
where we used the fact that the only contribution to the integral comes from the respective top Chern classes.  We let now $C_2\simeq\p^1$ and $C$ be a smooth curve of genus $g$.  Moreover, let $L_2=\oo_{\p^1}(1)$ and let $L$ be a line bundle of degree $d$ on $C$ from which it follows that $\deg L_1=d+1$.  Furthermore, as explained in Section \ref{sec:reduction}, $g(C_1)=g(C)+g(C_2)-1=g-1$.  Denote the integral $ \int_{C^{[k]}} c(L^{[k]})$ by $C_{k,g,d}$.  We then rewrite the above equality as
\[ C_{k,g-1,d+1} = \sum_{k_1+k_2=k} C_{k_1,0,1} C_{k_2,g,d}. \]
We easily see that $C_{1,0,d}=d$ and it follows that
\begin{align*}
C_{1,g,d}&=C_{1,g-1,d+1} - C_{1,0,1}=C_{1,0,d+g} - gC_{1,0,1}=d.
\end{align*}
Furthermore,
\begin{align*}
C_{2,g,d}=C_{2,g-1,d+1} - C_{1,g,d}C_{1,0,1} - C_{2,0,1}.
\end{align*}
Since $C_{2,0,1}=0$, we obtain
\begin{align*}
C_{2,g,d}&=C_{2,g-1,d+1} - d=C_{2,0,d+g} - (d+(d+1)+\cdots+(d+g-1))\\
&=\binom{d+g}{2} - \biggl( dg + \frac{g(g-1)}{2}\biggr)=\frac{d(d-1)}{2}.
\end{align*}
We now write
\[C_{3,g,d}=C_{3,g-1,d+1} - C_{2,g,d}C_{1,0,1} - C_{1,g,d}C_{2,0,1} - C_{3,0,1}.\]
We have that $C_{3,0,1}=0$ which yields
\begin{align*}
C_{3,g,d} &= C_{3,g-1,d+1} - C_{2,g,d}=C_{3,g-1,d+1} - \frac{d(d-1)}{2}\\
&=C_{3,0,d+g} - \sum_{m=0}^{g-1}\binom{d+m}{2}\\
&=\binom{d+g}{3}- \sum_{m=0}^{g-1}\binom{d+m}{2}.
\end{align*}
We now finally reach the desired term:
\[ C_{4,g,d} = C_{4,g-1,d+1} - C_{3,g,d}C_{1,0,1} - C_{2,g,d}C_{2,0,1} - C_{1,g,d}C_{3,0,1} - C_{4,0,1}. \]
We again use that $C_{4,0,1}=0$ to obtain
\begin{align*}
C_{4,g,d}&=C_{4,g-1,d+1} - C_{3,g,d}=C_{4,0,d+g} - \sum_{m=0}^{g-1}C_{3,g,d+m}\\
&=\binom{d+g}{4} - \sum_{m=0}^{g-1}\Biggl(\binom{d+g+m}{3} - \sum_{n=0}^{g-1}\binom{d+m+n}{2}\Biggr).\qedhere
\end{align*}
\end{proof}

 The aim of the remainder of this section is therefore to find $\int_{C^{[4]}}c_1(L^{[4]})ch_3(L^{[4]})$ by using Theorem \ref{thm:structure}, or more precisely the proof thereof.  In fact we prove the following:

\begin{prop}\label{prop}
Let $C$ be a curve of genus $g$ equipped with a line bundle $L$ of degree $d$.  The closed formula 
\[\int_{C^{[k]}}c_1(L^{[k]})ch_{k-1}(L^{[k]})=\frac{(-1)^k}{(k-1)!}A(k,g,d),\]
where $A(k,g,d)=(d+g-k+1)^2 - g(2-k)(2-k-\frac{g-1}{2}) - (4-k)\frac{(g-2)(2d+g-1)}{2}$
holds for all $k\geq 1$ and its corresponding generating function is
\[ \sum_{k=0}^{\infty} z^k \int_{C^{[k]}}c_1(L^{[k]})ch_{k-1}(L^{[k]}) = (B(g,d)z^2+C(g,d)z+D(g,d))ze^{-z},  \]
where
\begin{align*}
B(g,d)&=(g-1)\Bigl(1-\frac{g}{2}\Bigr)+(g-2)(2d+g-1),\\
C(g,d)&=g-2d+2+(g-2)(2d+g-1),\\
D(g,d)&=-(d+g)^2+4g-\frac{g(g-1)}{2}.
\end{align*}
\end{prop}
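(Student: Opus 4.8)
The plan is to compute $\int_{C^{[k]}} c_1(L^{[k]}) \mathrm{ch}_{k-1}(L^{[k]})$ by the reduction-to-$\mathbb{P}^1$ strategy of Section \ref{sec:reduction}, using that both $c_1$ and the component $\mathrm{ch}_{k-1}$ of the Chern character are \emph{additive} functions, so that Theorem \ref{thm:structure} (with $\phi = c_1$, $\psi = \mathrm{ch}$, picking out the degree-$k$ part) applies. First I would reduce to the projective line: with $C_1 = C \sqcup \mathbb{P}^1$, $L_2 = \mathcal{O}_{\mathbb{P}^1}(1)$, and $\deg(L_1) = d+1$, $g(C_1) = g-1$, the splitting $x^{[k]}|_{C^{[k_1]}\times(\mathbb{P}^1)^{[k_2]}} = p_1^*L^{[k_1]} \oplus p_2^*L_2^{[k_2]}$ together with additivity of $c_1$ and $\mathrm{ch}$ gives, after noting that only the top-degree pieces survive integration,
\begin{align*}
\int_{C_1^{[k]}} c_1(L_1^{[k]})\mathrm{ch}_{k-1}(L_1^{[k]}) &= \sum_{k_1+k_2=k}\Bigl(A^{\,c}_{k_1}(C,L)\,A^{\,ch}_{k_2}(\mathbb{P}^1,L_2) + A^{\,ch}_{k_1}(C,L)\,A^{\,c}_{k_2}(\mathbb{P}^1,L_2)\Bigr),
\end{align*}
where $A^{\,c}_m$ denotes $\int c_m(\cdot)$ and $A^{\,ch}_m$ denotes $\int \mathrm{ch}_m(\cdot)$ paired against the appropriate complementary class. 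From Lemma \ref{lemma} and \eqref{eq:chernchartaut} all the $\mathbb{P}^1$ inputs $\int_{(\mathbb{P}^1)^{[m]}} c_1(\cdot)\mathrm{ch}_{m-1}(\cdot)$ (and the degenerate boundary terms) are explicit binomials, and almost all of them vanish for $m \geq 2$; this is exactly the "lucky" sparsity that makes the recursion close.

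The second step is to turn this into a recursion in $g$ at fixed $k$. Writing $\Phi_{k,g,d} := \int_{C^{[k]}} c_1(L^{[k]})\mathrm{ch}_{k-1}(L^{[k]})$, the disjoint-union identity becomes a relation of the shape $\Phi_{k,g-1,d+1} = \Phi_{k,g,d} + (\text{lower-}k\text{ terms with explicit }\mathbb{P}^1\text{ factors})$, which telescopes down to the base case $\Phi_{k,0,d}$ computed directly on $(\mathbb{P}^1)^{[k]} = \mathbb{P}^k$ via Lemma \ref{lemma}: there $c_1 = (d-k+1)h$ and $\mathrm{ch}_{k-1} = \frac{1}{(k-1)!}(d-k+1)h^{k-1}$, so $\Phi_{k,0,d} = \frac{(-1)^{?}}{(k-1)!}(d-k+1)^2$ up to the sign bookkeeping coming from $s$ versus $c$; this already matches the $g=0$ specialisation of $\frac{(-1)^k}{(k-1)!}A(k,g,d)$. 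One then solves the telescoping sum: each step contributes the previously-computed $\Phi$ at lower $k$ evaluated along the shifted parameters, producing the two arithmetic-sum corrections $g(2-k)(2-k-\tfrac{g-1}{2})$ and $(4-k)\tfrac{(g-2)(2d+g-1)}{2}$ — the first from the quadratic-in-$m$ sum $\sum_{m=0}^{g-1}(d+m-k+1)$-type terms, the second a genuinely $k$-dependent correction that only appears because the recursion reaches down to $\Phi_{2,\cdot,\cdot}$ and $\Phi_{1,\cdot,\cdot}$. Proving the closed form $\Phi_{k,g,d} = \frac{(-1)^k}{(k-1)!}A(k,g,d)$ is then an induction on $g$ with the stated formula as hypothesis, verifying $A(k,g,d) - A(k,g-1,d+1) + (\text{correction terms}) = 0$ as a polynomial identity in $d,g,k$; I expect the hockey-stick identity (already used in Lemma \ref{lemma}) and elementary manipulation of $\sum m$ and $\sum m^2$ to suffice.

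For the generating function, once the per-$k$ formula is in hand it is fastest to assemble $\sum_k \Phi_{k,g,d}\,z^k$ directly: since $\Phi_{k,g,d} = \frac{(-1)^k}{(k-1)!}A(k,g,d)$ and $A(k,g,d)$ is a quadratic polynomial in $k$ (equivalently in $(d+g-k+1)$), writing $A$ in the basis $\{1,\,k,\,k(k-1)\}$ and using the elementary identities $\sum_{k\geq 1}\frac{(-1)^k}{(k-1)!}z^k = -z e^{-z}$, $\sum_{k\geq 1}\frac{(-1)^k k}{(k-1)!}z^k = (z^2 - z)e^{-z}$-type relations, $\sum_{k\geq 1}\frac{(-1)^k k(k-1)}{(k-1)!}z^k = -z^2 e^{-z}$ (obtained by differentiating $z e^{-z}$), collects everything into $(B(g,d)z^2 + C(g,d)z + D(g,d))\,z e^{-z}$; matching the three coefficients $B,C,D$ against the coefficients of $A$ in that basis is a short linear computation. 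The main obstacle I anticipate is purely bookkeeping: keeping the signs straight throughout (the Segre-to-Chern-character translation, the $(-1)^k$, and the alternating signs in the Chern-character exponential $\exp(-h)$) and correctly tracking which of the many boundary terms $\Phi_{k_1,0,1}\cdot(\text{stuff})$ actually vanish — getting that vanishing pattern right is what makes the recursion solvable in closed form rather than merely implicitly, and it is the step where an error would most easily creep in.
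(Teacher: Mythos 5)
Your plan follows essentially the same route as the paper's proof: the identical disjoint-union recursion $C_1=C\sqcup\mathbb{P}^1$ with $L_2=\mathcal{O}_{\mathbb{P}^1}(1)$, additivity of $c_1$ and $ch_{k-1}$, dimension counting to isolate the surviving cross terms, and telescoping in $g$ down to the base case on $(\mathbb{P}^1)^{[k]}\simeq\mathbb{P}^k$ computed via Lemma \ref{lemma}. The only immaterial difference is that you assemble the generating function by summing the closed-form coefficients directly, whereas the paper first obtains it by solving first-order ODEs for the auxiliary series $\mathcal{F}_{g,d}$ and $\mathcal{G}_d$ and then recovers the coefficients by the same per-$k$ recursion you describe.
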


\begin{proof}
To apply the setting of Theorem \ref{thm:structure} to our situation, we first remark on a slight change of notation intended to make computations easier to follow: from Theorem \ref{thm:egl} we know that, if $\phi$ is a polynomial function in Chern classes of line bundles $x$ on $C$, then the coefficients of $z^k$ in $H_{\phi}(C,x)$ depend only on $g$ and $d$.  We thus write $H_{\phi}(g,d)$ instead of $H_{\phi}(C,x)$.
We then take $\phi=c_1$ and $\psi = ch_{k-1}$ and set the following generating functions:

\begin{align*}
H_{\phi}(g,d) &:= \sum_{k=0}^{\infty} z^k \int_{C^{[k]}} c_1(L^{[k]}) = 1+dz\\
H_{\psi}(g,d) &:= \sum_ {k=1}^{\infty} z^k \int_{C^{[k-1]}} ch_{k-1}(L^{[k-1]})\\
H_{\phi\psi}(g,d) &:= \sum_{k=1}^{\infty} z^k \int_{C^{[k]}}c_1(L^{[k]})ch_{k-1}(L^{[k]}).
\end{align*}

We proceed by induction.  Consider the curve $C_1=C\sqcup C_2$ where $C_2\simeq\p^1$ and $C$ is a smooth curve of genus $g$.  Let $L_1$ be a line bundle on $C_1$ such that $(L_1)|_{C_2}=\oo_{\p^1}(d_2)$, for some positive integer $d_2$, and $L=(L_1)|_{C}$ has degree $d$.  As usual it follows that $\deg L_1=d+d_2$ and that $g(C_1)=g-1$.   From Theorem \ref{thm:structure} applied to the pair $(C_1,L_1)$ and setting $d_2=1$ we therefore obtain the following equality:
\begin{equation}\label{eq:recursion1}
H_{\phi\psi}(g-1,d+1)=H_{\phi\psi}(0,1) + H_{\phi\psi}(g,d) +H_{\psi}(0,1)H_{\phi}(g,d) + H_{\phi}(0,1)H_{\psi}(g,d).
\end{equation}
Thus the induction step decreases the genus of the curve and increases the degree of the line bundle at every step whence we obtain
\begin{equation}\label{eq:recursion2}
\begin{aligned}
H_{\phi\psi}(g,d)&=H_{\phi\psi}(g-1,d+1) - H_{\phi\psi}(0,1) - H_{\psi}(0,1)H_{\phi}(g,d) - H_{\phi}(0,1)H_{\psi}(g,d)\\
&= H_{\phi\psi}(0,d+g) - gH_{\phi\psi}(0,1) - H_{\psi}(0,1) \Sigma_1 - H_{\phi}(0,1)\Sigma_2,
\end{aligned}
\end{equation}
where 
\begin{align*}
\Sigma_1&=H_{\phi}(g,d)+H_{\phi}(g-1,d+1)+\cdots+H_{\phi}(1,d+g-1),\\
\Sigma_2&=H_{\psi}(g,d) + H_{\psi}(g-1,d+1) + \cdots + H_{\psi}(1,d+g-1).
\end{align*}
One sees immediately that $H_{\phi}(0,1)=1+z$ and moreover that
\begin{equation}\label{eq:sigma1}
\Sigma_1 = g+(d + (d+1) + \cdots + (d+g-1))z =g+ \frac{(g-2)(2d+g-1)}{2}z.
\end{equation}
We now compute the remaining unknown terms in (\ref{eq:recursion2}). 

\textbf{Step 1.}  Finding $H_{\psi}(g,d)$.

\noindent We consider first the auxiliary generating function 
\[\mathcal{F}_{g,d}(z) =\sum_{k=0}^{\infty} z^k \int_{C^{[k]}}ch_k(L^{[k]}) \]
which, by shifting the coefficients has the property that $z\mathcal{F}_{g,d}(z)=H_{\psi}(g,d)$.  Denote by $a_{k,g,d}$ the coefficient of $z^k$ in $\mathcal{F}_{g,d}(z)$.  From Lemma \ref{lemma} we have
\[a_{k,0,d}=\int_{\p^k} ch_k(\oo_{\p^1}(d)^{[k]}) = \frac{(-1)^{k-1}}{k!}(d-k+1),\]
 To find $a_{k,g,d}$ for $g\neq 0$, we again take advantage of the additivity of the Chern character as follows:  as before let $C_1=C\sqcup C_2$ be the disjoint union of two smooth curves and $L,L_1,L_2$ and $p,p_2$ as before.  Then, for all $k\geq 0$:
\begin{align}\label{eq:cherncharcomp}
a_{k,g(C_1),\deg(L_1)} &= \int_{C_1^{[k]}}ch_k (L_1^{[k]}) =  \sum_{k_1 + k_2 = k} \int_{C^{[k_1]}\times C_2^{[k_2]}} ch_k \bigl(p^* L^{[k_1]} \oplus p_2^* L_2^{[k_2]}\bigr)\nonumber\\
&=\sum_{k_1 + k_2 =k}\int_{C^{[k_1]}}ch_k \bigl(L^{[k_1]}\bigr) + \int_{C_2^{[k_2]}}ch_k \bigl(L_2^{[k_2]}\bigr)\nonumber\\
&= \int_{C^{[k]}} ch_k \bigl(L^{[k]}\bigr) + \int_{C_2^{[k]}} ch_k \bigl(L_2^{[k]}\bigr).
\end{align}
Setting as usual $C_2=\p^1$ and $L_2=\oo_{\p^1}(1)$ and taking $C$ to be a smooth curve of genus $g$ and $L$ a line bundle of degree $d$ and plugging into (\ref{eq:cherncharcomp}) yields:
\[ a_{k,g,d} = a_{k,g-1,d+1}-a_{k,0,1}. \]
Using induction on $g$ we obtain
\begin{equation}\label{eq:chernchar}
a_{k,g,d}=a_{k,0,d+g} - ga_{k,0,1}=\frac{(-1)^{k-1}}{k!}(d+(k-1)g-k+1).
\end{equation}
To find the generating function $\mathcal{F}_{g,d}(z)=\sum_{k=0}a_{k,g,d} z^k$, we start from the recurrence relation for the coefficients $a_{k,g,d}$:
\[ -(k+1)a_{k+1,g,d}=a_{k,g,d} + \frac{(-1)^{k-1}}{k!}(g-1). \]
This recurrence relation yields the following differential equation for $\mathcal{F}_{g,d}(z)$:
\[\frac{d}{dz}\mathcal{F}_{g,d}(z) + \mathcal{F}_{g,d}(z)=(g-1)e^z,\]
which, using the initial condition $a_{0,g,d}=-(d-g+1)$, we can easily solve to obtain:
\[ \mathcal{F}_{g,d}(z)=((g-1)z-d+g-1)e^{-z}. \]
Finally, we have that
\[H_{\psi}(g,d)=((g-1)z-d+g-1)ze^{-z}.\]
Note that this gives an alternative way of obtaining the generating function for the Chern character of tautological bundles on symmetric products for curves.  For a different approach using Grothendieck-Riemann-Roch, see \autocite[Lemma 2.5, Chapter VIII \S 2]{ACGH}.

\textbf{Step 2.} Finding $\Sigma_2$.

\noindent Having computed $H_{\psi}(g,d)$, we now easily get
\begin{align*}
\Sigma_2 &= H_{\psi}(g,d) + H_{\psi}(g-1,d+1) + \cdots + H_{\psi}(1,d+g-1)\\
&=(((g-1)z-d+g-1)+\cdots+(-d-g+1))ze^{-z}\\
&=\biggl(\frac{g(g-1)}{2}z-\Sigma_1+\frac{g(g-1)}{2}\biggr)ze^{-z}.
\end{align*}

\textbf{Step 3.}  Finding $H_{\phi\psi}(0,d+g)$ and $H_{\phi\psi}(0,1)$.

\noindent   We use the same inductive strategy as in Step 1.  Note that the coefficient of $z^k$ in $H_{\phi\psi}(0,d)$ is given by
\begin{align*}
\int_{\p^k} c_1(\oo_{\p^1}(d)^{[k]})ch_{k-1}(\oo_{\p^1}(d)^{[k]})=\frac{(-1)^{k}}{(k-1)!}(d-k+1)^2,
\end{align*}
where we again used Lemma \ref{lemma}.
We shall compute the auxiliary generating function
\[ \mathcal{G}_d(z) = \sum_{k=0}^{\infty}b_{k,d} z^k :=\sum_{k=0}^{\infty} \frac{(-1)^{k+1}}{k!}(d-k)^2 z^k \]
with the property that $H_{\phi\psi}(0,d)=z\mathcal{G}_d(z)$.  To do so, we observe that the recurrence relation 
\[ -(k+1)b_{k+1,d} = b_{k,d} + \frac{(-1)^k}{k!}(2d-1) +2 \frac{(-1)^{k-1}}{(k-1)!} \]
between the coefficients of $z^k$ in $\mathcal{G}_d(z)$ gives rise to the differential equation
\[ \frac{d}{dz}\mathcal{G}_d(z) + \mathcal{G}_d(z) + (2d-1)e^{-z} + ze^{-z} =0.\]
With the initial condition $b_0=-d^2$ we find that the solution is
\[ \mathcal{G}_d(z) = -\biggl(\frac{z^2}{2}+(2d-1)z+d^2\biggr)e^{-z}, \]
so that we finally obtain
\[ H_{\phi\psi}(0,d) = -\biggl(\frac{z^2}{2}+(2d-1)z+d^2\biggr)ze^{-z}. \]
Putting everything together yields the expression of $H_{\phi\psi}(g,d)$ in the statement of the Proposition.

To get the closed form of the coefficient of $z^k$ in $H_{\phi\psi}(g,d)$, one could either read it off from the generating function itself or just apply the same induction procedure: for a disjoint union $C_1=C\sqcup C_2$ of smooth curves equipped with the usual line bundles $L,L_1,L_2$, the following holds:
\begin{align*}
\int_{C'^{[k]}} c_1(L'^{[k]})ch_{k-1}(L'^{[k]})&=\int_{C_1^{[k]}}c_1(L_1^{[k]})ch_{k-1}(L_1^{[k]})+\int_{C_1}c_1(L_1)\int_{C_2^{[k-1]}}ch_{k-1}(L_2^{[k-1]})+\\
&+\int_{C_1^{[k-1]}}ch_{k-1}(L_1^{[k-1]})\int_{C_2}c_1(L_2)+\int_{C_2^{[k]}}c_1(L_2^{[k]})ch_{k-1}(L_2^{[k]}).
\end{align*}
Let
\[b_{k,g,d}:=\int_{C^{[k]}}c_1(L^{[k]})ch_{k-1}(L^{[k]}),\] 
With the same choices for $C_1$, $C_2$, $L_1$, $L_2$ as above we have that
\[ b_{k,g-1,d+1} = b_{k,0,1} + 1\cdot a_{k-1,g,d} + a_{k-1,0,1}\cdot d + b_{k,g,d}. \]
This in turn yields
\begin{align*}
b_{k,g,d}&=b_{k,g-1,d+1}-\frac{(-1)^k}{(k-1)!}(2-k)^2 - \frac{(-1)^k}{(k-1)!}(d+(k-2)g-k+2)-d\frac{(-1)^{k}}{(k-1)!}(3-k)\\
&=b_{k,g-1,d+1}-\frac{(-1)^{k}}{(k-1)!}((2-k)^2 + (4-k)d + (k-2)(g-1))\\
&=b_{k,0,d+g} - \frac{(-1)^{k}}{(k-1)!}\biggl(g(2-k)^2 + (4-k)\frac{(g-2)(2d+g-1)}{2} + (k-2)\frac{g(g-1)}{2}\biggr).
\end{align*}
Finally, we have that
\[ b_{k,0,d+g}= \frac{(-1)^k}{(k-1)!}(d+g-k+1)^2,\]
which concludes the proof. 
\end{proof}

We therefore have
\begin{align*}
\int_{C^{[4]}}c_1(L^{[4]})ch_3(L^{[4]})= b_{4,d,g}=\frac{1}{6}((d+g-3)^2-4g-g(g-1)).
\end{align*}

\subsection{Counts of secant planes: the general $f=2$ case}\label{sec:generalcasecurves}
Unfortunately we were not able to find a similar expression to the one in (\ref{eq:firstform}) for higher values of $e$.  Therefore, to deal with the general situation we take a different approach, although we will still rely on a similar induction step and Theorem \ref{thm:structure} for the final computation.  

Thus, the goal of this section is to find an expression for the integral
\[ \int_{C^{2k}} s_{k}^2(L^{[k]}) - s_{k-1}(L^{[k]})s_{k+1}(L^{[k]}), \]
where $C$ and $L$ are as before and $k\geq 2$.  To proceed, recall that the total Segre class of the dual vector bundle $(L^{[k]})^{\vee}$ may be written in terms of the complete homogeneous symmetric polynomials $h_i$ in the Chern roots as follows:
\[s((L^{[k]})^{\vee}) = 1 + h_1(\alpha_1,\ldots,\alpha_k) +\cdots + h_k(\alpha_1,\ldots,\alpha_k),\]
and the Segre power series $s_t=1+s_1 t + s_2 t^2 + \cdots $ can therefore be written as:
\[ s_t((L^{[k]})^{\vee}) = \prod_{i=1}^{\infty} \frac{1}{1-\alpha_i t} \]
We now obtain the generating function with coefficients given by $s_i^2(L^{[k]})=s_i^2((L^{[k]})^{\vee})$.   To do so, note that for two generating functions $F(z)=\sum_{i=1}^{\infty} a_i z^i$ and $G(z)=\sum_{i=1}^{\infty} b_i z^i$, their Hadamard product is given by
\[ (F*G)(z) = \sum_{i=1}^{\infty} a_i b_i z^i. \]
\begin{lemma}
	(i) For $k\geq 1$ we have:
	 \[s_t((L^{[k]})^{\vee})*s_t((L^{[k]})^{\vee}) = \exp\left( \sum_{n\geq 1}\frac{t^n}{n}(n!)^2 ch_n^2(L^{[k]}) \right).\]
	 Moreover, the coefficient $s_n^2((L^{[k]})^{\vee})$ of $t^n$ is given by
	 \begin{equation}\label{eq:coeff1}
	 \sum_{n_1,n_2,\ldots\geq 0}  \frac{1}{n_1 ! n_2 ! \cdots} \frac{(1! ch_1^2)^{n_1} (2! ch_2^2)^{n_2}\cdots}{1^{n_1} 2^{n_2}\cdots},
	 \end{equation}
	 where $n_1 + 2n_2 + \cdots = n$.
	 
	(ii) Denote by $s_t^l$ and $s_t^r$ the generating functions obtained by shifting $s_t$ by one to the left and to right, respectively, i.e.
	\begin{align*}
	s_t^l &= t + s_1 t^2 + \cdots + s_{k-1}t^k + \cdots,\\
	s_t^r &= s_1 + s_2 t + \cdots + s_{k+1}t^k + \cdots.
	\end{align*}
	 Then the coefficient $s_{n-1}(L^{[k]})^{\vee})s_{n+1}(L^{[k]})^{\vee})$ of $t^n$ in $s_t^l*s_t^r$ is
	\begin{equation}\label{eq:coeff2}
	\left(\sum_{n_1,n_2,\ldots\geq 0}  \frac{1}{n_1 ! n_2 ! \cdots} \frac{(1! ch_1)^{n_1} (2! ch_2)^{n_2}\cdots}{1^{n_1} 2^{n_2}\cdots}\right)\left(\sum_{l_1,l_2,\ldots\geq 0}\frac{1}{l_1 ! l_2 ! \cdots }\frac{(1! ch_1)^{l_1} (2! ch_2)^{l_2}\cdots}{1^{l_1} 2^{l_2}\cdots}\right)
	\end{equation}
	where $n_1 + 2n_2 + \cdots = n-1$ and $l_1 + 2l_2 + \cdots = n+1$
\end{lemma}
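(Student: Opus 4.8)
The natural framework is to pass to Chern roots and translate everything through Newton's identities. Write $\alpha_1,\dots,\alpha_k$ for the Chern roots of $L^{[k]}$, so that by definition $s_t((L^{[k]})^\vee)=\prod_i(1-\alpha_i t)^{-1}=\sum_{n\ge 0}h_n(\alpha)\,t^n$, where $h_n$ is the complete homogeneous symmetric function, and recall the dictionary $p_n(\alpha):=\sum_i\alpha_i^n=n!\,ch_n(L^{[k]})$ coming from $ch(L^{[k]})=\sum_i e^{\alpha_i}$. Taking logarithms gives $\log s_t((L^{[k]})^\vee)=\sum_{n\ge 1}\frac{p_n(\alpha)}{n}\,t^n=\sum_{n\ge 1}(n-1)!\,ch_n(L^{[k]})\,t^n$. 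This is the only input from geometry; everything else is formal manipulation of power series over the cohomology ring of $C^{[k]}$.

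For part (i) the plan is to compute the Hadamard square using the elementary fact that Hadamard product with a geometric series is a rescaling, $F(t)*(1-bt)^{-1}=F(bt)$. Writing the second factor $s_t((L^{[k]})^\vee)$ in terms of its roots and applying this factor by factor, together with bilinearity of $*$, should collapse $s_t((L^{[k]})^\vee)*s_t((L^{[k]})^\vee)$ to $\prod_{i,j}(1-\alpha_i\alpha_j t)^{-1}$; taking logarithms again and using $\sum_{i,j}(\alpha_i\alpha_j)^n=\bigl(\sum_i\alpha_i^n\bigr)^2=p_n(\alpha)^2=(n!)^2 ch_n^2(L^{[k]})$ then gives the claimed exponential form directly. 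To read off the coefficient of $t^n$ one invokes the exponential formula $\exp\!\bigl(\sum_{m\ge 1}c_m t^m\bigr)=\sum_n t^n\sum_{n_1+2n_2+\cdots=n}\prod_m \frac{c_m^{n_m}}{n_m!}$; plugging in $c_m=\tfrac{1}{m}(m!)^2 ch_m^2(L^{[k]})$ and regrouping the factorials produces (\ref{eq:coeff1}).

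For part (ii) I would first remove the shifts: since $s_t^l=t\cdot s_t$ and $s_t^r=(s_t-1)/t$, the coefficient of $t^n$ in $s_t^l*s_t^r$ is simply the ordinary product $s_{n-1}((L^{[k]})^\vee)\,s_{n+1}((L^{[k]})^\vee)$ of two Segre classes. Each of these equals $h_m(\alpha)$ for the relevant $m$, and expanding $h_m$ through the exponential formula (equivalently Newton's identities), i.e. $h_m(\alpha)=\sum_{n_1+2n_2+\cdots=m}\prod_i\frac{(i!\,ch_i)^{n_i}}{i^{n_i}n_i!}$ after substituting $p_i(\alpha)=i!\,ch_i(L^{[k]})$, yields exactly the parenthesised sums in (\ref{eq:coeff2}) — the first with $m=n-1$, so $n_1+2n_2+\cdots=n-1$, the second with $m=n+1$, so $l_1+2l_2+\cdots=n+1$. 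Multiplying the two expansions gives (\ref{eq:coeff2}); no Hadamard subtlety intervenes here because one is multiplying two already-extracted coefficients.

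The step I expect to be the real content is the computation of the Hadamard square in part (i): one must verify that the rescaling/partial-fraction manipulation genuinely collapses to the single product $\prod_{i,j}(1-\alpha_i\alpha_j t)^{-1}$ in the cohomology of $C^{[k]}$ rather than to a sum of such products, and that the substitution $p_n=n!\,ch_n$ is carried out consistently so that the factorials in (\ref{eq:coeff1}) come out exactly as stated. Everything downstream — the partition-indexed coefficient extraction and matching the powers of $i$ and of $i!$ — is routine bookkeeping, and part (ii) is essentially a definition chase once $s_t^l=t s_t$ and $s_t^r=(s_t-1)/t$ are noted.
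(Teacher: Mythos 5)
Your route is the same as the paper's: pass to Chern roots, substitute $p_n=n!\,ch_n$, reduce the Hadamard square to the product $\prod_{i,j}(1-\alpha_i\alpha_j t)^{-1}$, take logarithms, and extract coefficients via the exponential formula; part (ii) is in both cases the observation that $s_t^l=ts_t$ and $s_t^r=\frac{1}{t}(s_t-1)$ followed by expanding each $h_m$ separately, and that part of your argument is fine. The paper simply cites the identity $s_t*s_t=\prod_{i,j}(1-\alpha_i\alpha_j t)^{-1}$ as a ``standard result on symmetric functions''; you instead try to derive it from bilinearity of $*$ and the rescaling rule $F(t)*(1-bt)^{-1}=F(bt)$, and you explicitly flag this collapse as the step needing verification.

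That worry is justified: the step does not go through, and this is a genuine gap. Writing $s_t=\prod_i(1-\alpha_i t)^{-1}=\sum_i c_i(1-\alpha_i t)^{-1}$ by partial fractions and applying bilinearity yields
\[ s_t*s_t=\sum_{i,j}\frac{c_i c_j}{1-\alpha_i\alpha_j t}, \]
a \emph{sum} of geometric series, not the product $\prod_{i,j}(1-\alpha_i\alpha_j t)^{-1}$. The two are genuinely different: by definition the $t^n$-coefficient of $s_t*s_t$ is $h_n(\alpha)^2$, whereas by the Cauchy identity
\[ \prod_{i,j}\frac{1}{1-\alpha_i\alpha_j t}=\exp\Bigl(\sum_{n\geq 1}\frac{p_n(\alpha)^2}{n}t^n\Bigr)=\sum_{\lambda}s_{\lambda}(\alpha)^2 t^{|\lambda|}, \]
whose $t^n$-coefficient is $\sum_{\lambda\vdash n}s_{\lambda}(\alpha)^2$, of which $h_n^2=s_{(n)}^2$ is only one summand. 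Already for $k=2$ and $n=2$ one computes $h_2(\alpha)^2=h_2\bigl((\alpha_i\alpha_j)_{i,j}\bigr)-e_2(\alpha)^2$, and $e_2^2=c_2^2$ is a nonzero top-degree class on $C^{[4]}$ in general, so the discrepancy does not disappear in cohomology. Consequently the exponential in the first display of (i) does not equal the Hadamard square, and the partition sum you obtain from the exponential formula computes $\sum_{\lambda\vdash n}s_\lambda^2$ rather than $s_n^2$. You have, in effect, reproduced the paper's own argument including its weakest link (the paper asserts the same identity without proof), so to repair the proof one must either replace the product formula by the correct partial-fraction expression for the Hadamard square or work with the full Cauchy expansion $\sum_{\lambda\vdash n}s_\lambda^2$.
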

\begin{proof}
(i) This follows by standard results on symmetric functions.  In particular, one has that
\[s_t * s_t = \prod_{i,j=1}^k \frac{1}{1-\alpha_i \alpha_j t},\]
which then implies that
\[ \sum_{n\geq 1}\frac{t^n}{n} \sum_{i,j=1}^k (\alpha_i \alpha_j)^n = \ln (s_t*s_t). \]
Using the fact that $ch_n = \frac{p_n}{n!}$ we get  $\sum_{i,j=1}^k (\alpha_i \alpha_j)^n =p_n^2= (n!)^2 ch_n^2$, where $p_n$ denotes as before the $n$-th power sum symmetric polynomial in the Chern roots of $L^{[k]}$.  The expression of the coefficient of $t^n$ follows immediately.

(ii) This follows by a standard combinatorial computation and recognising that $s_t^l=ts_t$ and $s_t^r=\frac{1}{t}(s_t-1)$.
\end{proof}

Thus, to understand the structure of the formula for 
\[\int_{C^{[2k]}} s_k^2(L^{[k]}) - s_{k-1}(L^{[k]})s_{k+1}(L^{[k]}),\]
it is enough to make sense of the integrals of the expressions (\ref{eq:coeff1}) and (\ref{eq:coeff2}).  To do so we use Corollary \ref{cor:struct} to find separate expressions for 
\begin{equation}\label{eq:bla1}
\int_{C^{[2k]}} s_k^2(L^{[k]})
\end{equation} 
and
\begin{equation}\label{eq:bla2}
\int_{C^{[2k]}} s_{k-1}(L^{[k]})s_{k+1}(L^{[k]}).
\end{equation}
To obtain (\ref{eq:bla1}) we must therefore consider the integral
\[ \int_{C^{[2k]}} \left(\sum_{n_1,n_2,\ldots\geq 0}C_{n_1,n_2,\ldots} ch_1^{2n_1}(L^{[k]})ch_2^{2n_2}(L^{[k]})\cdots\right)  \]
where $n_1 + 2n_2 + \cdots = k$ and $C_{n_1,n_2,\ldots}$ denote the constants from (\ref{eq:coeff1}).  Now, for each choice of $n_i$ with $n_1+2n_2+\cdots = k$, Corollary \ref{cor:struct} yields
\begin{align*}
\int_{C^{[2k]}}ch_1^{2n_1}(L^{[k]})ch_2^{2n_2}(L^{[k]})\cdots =\mathcal{L}+\prod_{i}\Bigl(\int_{C^{[i]}}ch_i(L^{[i]})\Bigr)^{2n_i}+\\
 + \sum_{\substack{0\leq i_1\leq 2n_1\\ 0\leq i_2\leq 2n_2\\ \ldots}}\mathcal{L}_{i_1\ldots i_j}\Bigl(\int_{C}ch_1(L)\Bigr)^{i_1} \Bigl(\int_{C^{[2]}}ch_2(L^{[2]})\Bigr)^{i_2}\cdots,
\end{align*} 
where  $\mathcal{L}$ and the $\mathcal{L}_{i_1\ldots i_j}$ are linear functions as in the context of Corollary \ref{cor:struct}.  Note that for simplicity we omit the scalar multiples before each term in the sum.  Finally, each term between parantheses in the above equality can be found by either reading off the coefficients of the generating function $\mathcal{F}_{g,d}$ from Step 1 of the proof of Proposition \ref{prop}, or directly by a recursive argument as done in the case of the coefficients $b_{k,g,d}$ from loc.cit. With either method one obtains:
\[ \int_{C^{[i]}}ch_i(L^{[i]}) =  \frac{(-1)^{i-1}}{i!}(d+(1-i)(1-g)).\]
This then yields the desired structure result for (\ref{eq:bla1}).  A similar expression can then be obtained for (\ref{eq:bla2}).

%\section{The case of surfaces}
%\[\int_{S^{[4]}}s_4(L^{[2]})^2 - s_{3}(L^{[2]})s_{5}(L^{[2]}).\]

\printbibliography{}
\end{document}